\numberwithin{equation}{section}
\newtheorem{theor}{Theorem}[section]
\newtheorem{lemma}[theor]{Lemma}
\newtheorem{defi}[theor]{Definition}
\newtheorem{corol}[theor]{Corollary}
\newtheorem{remark}[theor]{Remark}
\newcommand{\R}{\mathbb{R}}
\newcommand{\re}{\mathbb{R}}
\renewcommand{\emptyset}{\mbox{\rm \O}}
\newcommand{\bigchi}{\mathop{\mathchoice%
{\mbox{\Large$\chi$}}{\mbox{\large$\chi$}}{\mbox{\normalsize$\chi$}}%
{\mbox{\small$\chi$}}}\nolimits}
\def\essinf{\mathop{\rm ess\ inf\ }}
\begin{document}

\title[Weighted inequalities related to a Muckenhoupt--Wheeden problem]
{Weighted inequalities related to a  Muckenhoupt and Wheeden problem for one-side singular integrals}

\author[M.~S.~Riveros]{Mar\'{\i}a Silvina Riveros}
\address{Mar\'{\i}a Silvina Riveros \\ Facultad de Matem\'atica Astronom\'ia y F\'isica \\ Universidad Nacional de C\'{o}rdoba \\
CIEM (CONICET) \\ 5000 C\'{o}rdoba, Argentina}
\email{sriveros@famaf.unc.edu.ar}

\author[R.~E.~Vidal]{Ra\'ul Emilio Vidal}
\address{Ra\'ul Emilio Vidal \\  Facultad de Matem\'atica Astronom\'ia y F\'isica
\\ Universidad Nacional de C\'ordoba\\
CIEM (CONICET) \\ 5000 C\'{o}rdoba, Argentina}
\email{rauloemilio@gmail.com}

\thanks{ supported by CONICET, and SECYT-UNC}

\subjclass[2000]{42B20, 42B25}

\keywords{One-sided singular integrals,  Sawyer weights, Weighted norm inequalities.}

%\date{October 20, 2013. \textit{Revised}: \today}

\begin{abstract}
In this paper we obtain for $T^+$, a one-sided singular integral  given by a Calder\'on-Zygmund kernel with support in $(-\infty,0)$,   a $L^p(w)$ bound   when $w\in A_1^+$.  A. K. Lerner, S. Ombrosi, and C. P\'{e}rez proved in [ ``$A_{1}$ Bounds for Calder\'{o}n-Zygmund operators related to a problem of Muckenhoupt and Wheeden", Math. Res. Lett. \textbf{16} (2009), no. 1, 149-156]   that this bound is sharp with respect to $||w||_{A_1} $ and $p$ . We also give a  $L^{1,\infty}(w)$ estimate, for a  related  problem of Muckenhoupt and Wheeden for  $w\in A_1^+$  . We improve the classical results, for one-sided singular integrals, by putting in the inequalities a wider class of weights.
\end{abstract}

\maketitle

\vskip-1.2cm\null

%\begin{center}
%\small
%\parbox{0.75\textwidth}{\footnotesize\tableofcontents}
%\end{center}

\section{Introduction}

Let $M$ be the classical Hardy-Littlewood maximal operator and $w$  a weight (i.e.  $w\in L_{loc}^1(\re^n)$ and $w>0$).  C. Fefferman and
E.M. Stein  in \cite{FS} proved an extension of the classical weak-type (1, 1) estimate:
\begin{equation}\label{debilmaximal}
||Mf||_{L^{1,\infty}(w)}\leq c_n\int_{\re^n}|f(x)|Mw(x)\,dx,
\end{equation}
where $c_n$ depends only  on the dimension. This is a sort of duality for $M$. % The proof of this result is using  a covering type lemma.
A consequence of this result, using an interpolation argument, is the following:  if $1 < p < \infty$ and  $p' = \frac{p}{p-1}$ then,
\begin{align*}
\int_{\re^n}(Mf(x))^pw(x)\,dx\leq c_np'\int_{\re^n}|f(x)|^pMw(x)\,dx.
\end{align*}

 B. Muckenhoupt and R. Wheeden in the 70's  conjectured that the analogue of (\ref{debilmaximal})
should hold for $T$, a singular integral operator, namely\\
\begin{equation}\label{conjMW}
\sup_{\lambda>0}\lambda w(\{x\in\re^n: |Tf(x)|>\lambda\})\leq C\int_{\re^n}|f(x)|Mw(x)\,dx.
\end{equation}
The best result along this line can be found in \cite {P} where $M$ is replaced by the slightly larger operator $M_{L(logL)^\epsilon}$, $ \epsilon> 0$,
\begin{align*}
||Tf||_{L^{1,\infty}(w)}\leq C\,2^{\frac{1}{\epsilon}}\int_{\re^n}|f(x)|M_{L(\log L)^\epsilon}w(x)\,dx.
\end{align*}
The one-sided version of  this result it was obtained in \cite{MCS}.

 M.  C. Reguera in \cite{CR}  and  M. C. Reguera   and  C.  Thiele in \cite{CRT} proved that the Muckenhout-Wheeden conjecture  is false. In \cite{CR}    the author give a first approach by putting  in the right hand side the dyadic  maximal operator.  In \cite{CRT} they disproved (\ref{conjMW}) for $T$ the Hilbert transform.

On the other hand there is a variant of the conjecture (\ref{conjMW}) which has a lot of interest, namely the weak Muckenhoupt-Wheeden conjecture.
The idea is to assume an a priori condition on the weight $w$. This condition can be
read essentially from inequality (\ref{conjMW}): the weight $w\in A_1$  if there is a finite
constant $C$ such that $Mw(x)\leq Cw(x) \quad a.e. x\in \mathbb{R}^n$. Denote $||w||_{A_1}$ the smallest of these $C$. The conjecture is the following:

Let $w \in A_1$, then
\begin{align*}
\sup_{\lambda>0}\lambda w(\{x\in\re^n: |Tf(x)|>\lambda\})\leq C ||w||_{A_1}\int_{\re^n}|f(x)|w(x)\,dx.
\end{align*}
In \cite{ASC} the authors exhibit a logarithmic growth
 \begin{equation}\label{debilTA1}
||Tf||_{L^{1,\infty}(w)}\leq C ||w||_{A_1} \log(e+||w||_{A_1}) ||f||_{L^1(w)},
\end{equation}
where $C$ only depends on $T$ and the dimension. It is believed that the results obtained by Lerner, Ombrosi, and P\'{e}rez are the best possible.
Recently F. Nazarov, A. Reznikov, V. Vasyunin, A. Volberg, proved that the weak Muckenhoupt-Wheeden conjecture is also false. See \cite{NRVV}.

 To prove this logarithmic growth result,  they  have
to study first the corresponding weighted $L^p(w)$ estimate for $1 < p < \infty$ and $w \in A_1$
 \begin{equation}\label{fuerteTA1}
||Tf||_{L^{p}(w)}\leq C pp' ||w||_{A_1}  ||f||_{L^p(w)},
\end{equation}
where $C$ only depends on $T$ and the dimension, being the result, this time, fully sharp. See \cite{ASC}.
As a consequence of (\ref{debilTA1}) and applying  the Rubio de Francia's algorithm  they also get the following result,
let $1 < p < \infty$, $w \in A_p$ and let  $T$ be a Calder\'on-Zygmund operator
 then
\begin{equation}\label{fuerteTAp}
||T||_{L^{p,\infty}(w)} \leq  C||w||_{A_p}\log(e+||w||_{A_p})||f||_{L^p(w)},
\end{equation}
where $C = C(n, p, T).$

The  first result of this kind obtaining  the precise constant dependence on the $ A_p $ norm of w of the operator norms of singular integrals,  maximal functions,  and other operators in $L^p(w)$ was Buckley in \cite{B}. There he proves that
$$||M||_{L^p(w)}\leq C(n,p ) ||w||_{A_p}^{\frac 1{p-1}}.$$
Recently  T.  Hyt\"{o}nen, C. P\'erez and E. Rela in \cite{HPR}  improved this result  by giving  a sharp weighted bound for
the Hardy-Littlewood maximal  operator  involving  the Fujii-Wilson $A_\infty $ - constant.
 Also  T. Hyt\"onen and C. P\'{e}rez  in \cite{HP}  improved    (\ref{debilTA1}) ,  (\ref{fuerteTA1})  and several well known results,  using for all these cases the Fujii-Wilson $A_\infty $- constant.

In this paper we obtain  similar results as the ones in (\ref{debilTA1}), (\ref{fuerteTA1}) and (\ref{fuerteTAp})   for one-sided weights  and one-sided singular integrals.

A weight  $w\in A_1^+$ if there exists $C>0$ such that $M^-w(x)<C\,w(x)$  $a.e. x\in \mathbb{R}$ (where $M^-$ is the one-sided Hardy-Littlewood maximal operator ), the smallest possible $C$ here is denoted by $||w||_{A_1^+}$. When $n=1$ and $T$ is a Calder\'on-Zygmund singular integral operator with kernel $K$ supported
in $(-\infty,0)$ we say that $T$ is a one-sided singular integral
and we write $T^+$ to emphasize it.  In this paper we obtained the following results

\begin{theor}\label{fuerte}
Let  $1 < p < \infty$, $w\in A^+_1$ and  $T^+$ be a one-side singular integral,
 then,
 \begin{equation}
||T^+f||_{L^p(w)} \leq  C pp' ||w||_{A_1^+}||f||_{L^p(w)},
 \end{equation}
where $C$ only depends on $T^+$.
\end{theor}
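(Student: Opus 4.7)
My strategy is to adapt the Lerner--Ombrosi--P\'erez proof of (\ref{fuerteTA1}) to the one-sided setting, replacing the Hardy--Littlewood maximal function $M$ and Muckenhoupt class $A_1$ by the one-sided maximals $M^+$ and $M^-$ and the Sawyer class $A_1^+$, and using a one-sided Rubio de Francia algorithm built from $M^+$.

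The first ingredient is duality: with $\sigma = w^{1-p'}$ the dual weight,
$$\|T^+ f\|_{L^p(w)} = \sup_{\|g\|_{L^{p'}(\sigma)}=1} \int T^+ f \cdot g \, dx.$$
Since the kernel of $T^+$ is supported in $(-\infty,0)$, its formal adjoint $T^-$ is a one-sided singular integral with kernel supported in $(0,\infty)$, and passing to the adjoint yields $\int T^+ f \cdot g \, dx = \int f \cdot T^- g \, dx$.

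The second ingredient is a one-sided Rubio de Francia algorithm
$$\mathcal{R}g := \sum_{k=0}^\infty \frac{(M^+)^k g}{(2\|M^+\|_{L^{p'}(\sigma)})^k},$$
which produces a majorant $\mathcal{R}g \ge g$ with $\|\mathcal{R}g\|_{L^{p'}(\sigma)} \le 2\|g\|_{L^{p'}(\sigma)}$ and such that $M^+(\mathcal{R}g) \le 2\|M^+\|_{L^{p'}(\sigma)} \mathcal{R}g$; that is, $\mathcal{R}g$ belongs to the one-sided $A_1$-class associated with $M^+$, with constant at most $2\|M^+\|_{L^{p'}(\sigma)}$. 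The sharp one-sided Buckley bound (of Mart\'{\i}n-Reyes type) yields $\|M^+\|_{L^{p'}(\sigma)} \le C\,p\,\|w\|_{A_1^+}$, using the inclusion $A_1^+ \subset A_p^+$ with comparable constants and the identity $\|\sigma\|_{A_{p'}^-}^{1/(p'-1)} = \|w\|_{A_p^+}$.

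The third ingredient is a sharp weighted bound for $T^-$ against the one-sided $A_1$-type weight $\mathcal{R}g$. This comes from a one-sided Coifman--Fefferman inequality (in the spirit of Aimar--Forzani--Mart\'{\i}n-Reyes) coupled with a sharp Buckley-type bound for the relevant one-sided maximal operator on $\mathcal{R}g$, yielding an estimate with linear dependence on the one-sided $A_1$-constant of $\mathcal{R}g$ and a factor $C\,p'$. Combining this with H\"older against $\|f\|_{L^p(w)}$ and the RdF bound $\|\mathcal{R}g\|_{L^{p'}(\sigma)} \le 2\|g\|_{L^{p'}(\sigma)}$ delivers the target $\|T^+ f\|_{L^p(w)} \le C\,pp'\,\|w\|_{A_1^+}\,\|f\|_{L^p(w)}$. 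The main obstacle is obtaining the correct $pp'$ factor together with only a single power of $\|w\|_{A_1^+}$: na\"{\i}vely combining one-sided Coifman--Fefferman with one-sided Buckley would produce $\|w\|_{A_1^+}^{1/(p-1)}$, missing the target. A sharper P\'erez-type endpoint inequality for one-sided singular integrals must be used to keep the $A_1^+$ dependence linear while tracking the $p$- and $p'$-factors separately (one $p$ from the Buckley step on $\sigma$, one $p'$ from the endpoint step on $\mathcal{R}g$).
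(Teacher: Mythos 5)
Your outline correctly identifies the Lerner--Ombrosi--P\'erez framework (duality to $T^-$, Coifman--Fefferman, Rubio de Francia, a Buckley-type bound), but it omits the two concrete devices that actually produce the linear dependence on $\|w\|_{A_1^+}$, and the route you sketch does not close. Applying Rubio de Francia directly to the dual function $g\in L^{p'}(\sigma)$, $\sigma=w^{1-p'}$, is problematic: first, since $\sigma\in A_{p'}^-$ the bounded operator there is $M^-$, not $M^+$ as written; more seriously, a majorant $\mathcal{R}g\geq g$ gives no control over $T^-g$ because $T^-$ is not positive, so the step ``bound $T^-$ against the $A_1$-type weight $\mathcal{R}g$'' has no clear meaning. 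The paper instead dualizes inside a two-weight Fefferman--Stein inequality (Theorem \ref{central}): for any $w\in A_1^+$ and $1<r<2$, $\|T^+f\|_{L^p(w)}\leq Cpp'(r')^{1/p'}\|f\|_{L^p(M_r^-w)}$, which by duality becomes a bound for $T^-f/M_r^-w$ in $L^{p'}(M_r^-w)$. The Rubio de Francia algorithm is run inside Lemma \ref{coif}, on $h\in L^p(M_r^-w)$, to reduce $T^-$ to $M^-$ with only a single $p'$ factor; the remaining factor $p(r')^{1/p'}$ comes from H\"older plus the unweighted bound for $M_{(pr)'}^-$. None of this appears in your plan.

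The second missing ingredient is the sharp weak reverse H\"older inequality (Lemma \ref{RHI}): for $w\in A_1^+$ with $r_w=1+\frac{1}{16e^{\frac{1}{e}}\|w\|_{A_1^+}}$, one has $M_{r_w}^-(w\chi_{(a,b)})(b)\leq 2M^-(w\chi_{(a,b)})(b)$, hence (after the localization over unit intervals $(k,k+1)$ used in the proof) $M_{r_w}^-w\leq 2\|w\|_{A_1^+}w$ and $r_w'\lesssim\|w\|_{A_1^+}$. Plugging $r=r_w$ into Theorem \ref{central} gives exactly $Cpp'\|w\|_{A_1^+}^{1/p'}\cdot\|w\|_{A_1^+}^{1/p}=Cpp'\|w\|_{A_1^+}$. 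You yourself note that naively stacking Coifman--Fefferman and Buckley produces the wrong power of $\|w\|_{A_1^+}$, but the ``sharper P\'erez-type endpoint inequality'' you invoke to repair this is never identified; it is precisely the $M_r^-$-weighted Fefferman--Stein estimate together with the quantitative reverse H\"older exponent that closes the gap, and without them the stated linear bound does not follow.
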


\begin{theor}\label{debil}
Let   $w\in A^+_1$ and  $T^+$ be a one-side singular integral,
 then,
 \begin{equation}\label{weak}
||T^+f||_{L^{1,\infty}(w)} \leq C||w||_{A_1^+}\log(e+||w||_{A_1^+})||f||_{L^1(w)},
 \end{equation}
where $C$ only depends on $T^+$.
\end{theor}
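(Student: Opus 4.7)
The plan is to adapt the Lerner--Ombrosi--P\'erez argument of \cite{ASC} to the one-sided setting: I will use Theorem \ref{fuerte} as the strong-type input, combine it with a one-sided Calder\'on--Zygmund decomposition of $f$ at height $\lambda$ together with a cancellation estimate for the bad part, and then optimize the Lebesgue exponent $p$ just above $1$ so that $(pp')^{p}\sim\log(e+||w||_{A_1^+})$ while $||w||_{A_1^+}^{p-1}$ stays bounded.

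Fix $f\in L^{1}(w)$ and $\lambda>0$. First I perform the decomposition $f=g+b$, $b=\sum_{j}b_{j}$, on disjoint intervals $I_{j}=(a_{j},b_{j})$, with $\mathrm{supp}\,b_{j}\subset I_{j}$, $\int b_{j}=0$, $||b_{j}||_{L^{1}}\le C\lambda|I_{j}|$, $|g|\le C\lambda$, and $\sum_{j}|I_{j}|\le C\lambda^{-1}||f||_{L^{1}}$. Because $K$ is supported in $(-\infty,0)$, $T^{+}b_{j}$ is essentially supported in $(-\infty,b_{j})$, so intervals must be enlarged to the \emph{left}: $I_{j}^{*}=(a_{j}-2|I_{j}|,b_{j})$ and $\Omega^{*}=\bigcup_{j}I_{j}^{*}$. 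For the good part, Chebyshev, Theorem \ref{fuerte} and the pointwise bound $|g|\le C\lambda$ yield
\[
w(\{|T^{+}g|>\lambda/2\})\le\frac{(Cpp'||w||_{A_1^+})^{p}}{\lambda^{p}}\int|g|^{p}w\,dx\le\frac{C(pp')^{p}||w||_{A_1^+}^{p}}{\lambda}\,||g||_{L^{1}(w)},
\]
where $||g||_{L^{1}(w)}$ is controlled by $||f||_{L^{1}(w)}$ up to a power of $||w||_{A_1^+}$ absorbable in the final optimization, coming from the $A_1^+$ control of the averages $|\bar f_{I_{j}}|\,w(I_{j})$. For the bad part, the $A_1^+$ hypothesis together with the CZ stopping time controls $w(\Omega^{*})$; on $(\Omega^{*})^{c}$ the cancellation $\int b_{j}=0$ and the Lipschitz-type regularity of $K$ give, for $x<a_{j}-2|I_{j}|$,
\[
|T^{+}b_{j}(x)|\le C\,\frac{|I_{j}|}{(c_{j}-x)^{2}}\,||b_{j}||_{L^{1}},
\]
where $c_{j}$ is the center of $I_{j}$. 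Everything then reduces to the one-sided tail estimate
\[
\int_{-\infty}^{a_{j}-2|I_{j}|}\frac{|I_{j}|}{(c_{j}-x)^{2}}\,w(x)\,dx\le C\,||w||_{A_1^+}\,\frac{w(I_{j})}{|I_{j}|},
\]
after which Chebyshev furnishes the corresponding $\lambda^{-1}$ weak bound. Collecting all contributions and taking $p-1=1/\log(e+||w||_{A_1^+})$ so that $p'\sim\log(e+||w||_{A_1^+})$ and $||w||_{A_1^+}^{p-1}\le e$, the inequality \eqref{weak} follows.

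The main obstacle is precisely the one-sided tail estimate displayed above. Unlike the two-sided situation in \cite{ASC}, one cannot appeal to a doubling property of $w$ to absorb the kernel decay, because $A_1^+$ is strictly weaker than $A_1$ in this respect. Instead, I would dyadically decompose $(-\infty,a_{j}-2|I_{j}|)$ into annuli of width $\sim 2^{k}|I_{j}|$ and, on each annulus, apply the one-sided $A_1^+$ inequality $\int_{x^{*}-h}^{x^{*}}w\le h||w||_{A_1^+}w(x^{*})$ at a point $x^{*}$ just to the right of $b_{j}$. It is exactly the matching between the left-averaging character of $A_1^+$ and the fact that the kernel of $T^{+}$ lives on the left that makes the argument close; the analogous bound for a two-sided singular integral would require the full $A_{1}$ condition, which is why the one-sided setting is both delicate and well-adapted to this class of operators.
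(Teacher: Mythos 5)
Your overall strategy (one-sided Calder\'on--Zygmund decomposition, cancellation on the bad part, optimization of the exponent $p\sim 1+1/\log(e+\|w\|_{A_1^+})$) follows the Lerner--Ombrosi--P\'erez scheme that the paper also uses, and the treatment of the bad part is essentially sound, up to replacing the bound $\|w\|_{A_1^+}\,w(I_j)/|I_j|$ by a bound in terms of $\operatorname{ess\,inf}_{I_j}M^-(w\chi_{\re\setminus\widetilde I_j})$, which is the form the paper needs for the next step. However, the estimate of the good part has a genuine gap that cannot be absorbed into the optimization.

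You apply Theorem \ref{fuerte} directly, i.e.\ the strong bound with the \emph{same} weight $w$ on both sides, and then reduce $\int|g|^p w\le(C\lambda)^{p-1}\|g\|_{L^1(w)}$. The trouble is that $\|g\|_{L^1(w)}$ contains the term $\sum_j \lambda\, w(I_j)=\lambda\,w(\Omega)$, and the only available bound is $w(\Omega)\lesssim \lambda^{-1}\|w\|_{A_1^+}\|f\|_{L^1(w)}$ via the weak-$(1,1)$ boundedness of $M^+$ in $A_1^+$. This costs a \emph{full} extra power of $\|w\|_{A_1^+}$, giving in the end $\lambda^{-1}\|w\|_{A_1^+}^{2}\log(e+\|w\|_{A_1^+})\|f\|_{L^1(w)}$, one power of $\|w\|_{A_1^+}$ more than the claimed bound; the $p-1=1/\log(e+\|w\|_{A_1^+})$ choice only absorbs factors of the form $\|w\|_{A_1^+}^{p-1}$, not an extra $\|w\|_{A_1^+}^{1}$. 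Your remark that the loss is ``absorbable in the final optimization'' is precisely the point that fails.

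The paper circumvents this by proving and using the intermediate Theorem \ref{central}, whose right-hand side carries $M_r^-w$ rather than $w$. On each unit interval $(k,k+1)$ the weight is first truncated to $\re\setminus\widetilde\Omega$ (and further to $(k,x)$), then $M_{r_w}^-$ is reduced to $M^-$ by the sharp weak reverse H\"older inequality of Lemma \ref{RHI}, so that the good-part term becomes $\int |g|\,M^-(w\chi_{\re\setminus\widetilde\Omega})\,dx$. On each $I_j$ the function $M^-(w\chi_{\re\setminus\widetilde\Omega})$ is comparable to its essential infimum on $I_j$ (inequality (\ref{desMrOmega})), which permits replacing $\lambda|I_j|$ by $\int_{I_j}f$ and yields $\int f\,M^-w\le \|w\|_{A_1^+}\int f\,w$ with no extra power of the $A_1^+$ constant. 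To repair your proof you would need to replace the invocation of Theorem \ref{fuerte} by Theorem \ref{central} together with Lemma \ref{RHI}, and perform the truncation/essinf step; with Theorem \ref{fuerte} alone the good-part estimate cannot reach the sharp constant.
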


\begin{corol}\label{coroldebil}
Let  $1 < p < \infty$, $w\in A^+_p$ and  $T^+$ be a one-side singular integral,  then
\begin{equation}
||T^+f||_{L^{p,\infty}(w)} \leq C||w||_{A_p^+}\log(e+||w||_{A_p^+})||f||_{L^p(w)},
 \end{equation}
where $C=C(p,T^+)$.
\end{corol}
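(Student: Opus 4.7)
The plan is to derive Corollary \ref{coroldebil} from Theorem \ref{debil} by an extrapolation argument in the spirit of Rubio de Francia, exactly as the two-sided bound \eqref{fuerteTAp} is derived from \eqref{debilTA1} in \cite{ASC}. The scheme has three ingredients: a Lorentz duality reduction that trades the $L^{p,\infty}(w)$-norm for a linear pairing against $L^{p',1}(w)$-test functions, a one-sided Rubio de Francia iteration that manufactures an $A_1^+$-weight dominating the test function, and an application of Theorem \ref{debil} with careful tracking of constants to retrieve the logarithmic factor.

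In more detail, set $\sigma = w^{1-p'}$. The standard duality between one-sided Muckenhoupt classes yields $\sigma \in A_{p'}^-$ with $||\sigma||_{A_{p'}^-} = ||w||_{A_p^+}^{1/(p-1)}$, and the one-sided Buckley-type bound for $M^-$ on $L^{p'}(\sigma)$ gives $||M^-||_{L^{p'}(\sigma)} \leq C\,||w||_{A_p^+}$ (the exponent computation is identical to the two-sided one, since $(p-1)(p'-1)=1$). For a nonnegative $h \in L^{p'}(\sigma)$, define the iterate
\begin{equation*}
Rh \;=\; \sum_{k=0}^{\infty}\frac{(M^-)^k h}{\bigl(2\,||M^-||_{L^{p'}(\sigma)}\bigr)^k},
\end{equation*}
which satisfies $Rh \geq h$, $||Rh||_{L^{p'}(\sigma)} \leq 2\,||h||_{L^{p'}(\sigma)}$, and $M^-(Rh) \leq 2\,||M^-||_{L^{p'}(\sigma)}\,Rh$. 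The last inequality is precisely the statement that $Rh$ lies in $A_1^+$ (in the convention of the paper, defined through $M^-$) with $||Rh||_{A_1^+} \leq C\,||w||_{A_p^+}$.

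To conclude, use Lorentz duality $(L^{p,\infty}(w))^*=L^{p',1}(w)$ to reduce the claim to estimating $\int |T^+f|\,h\,w$ uniformly over nonnegative $h$ with $||h||_{L^{p',1}(w)}\leq 1$. Apply Theorem \ref{debil} with the $A_1^+$-weight $Rh$ and combine it with H\"older's inequality in the $w\,dx$-pairing (which converts the $L^{p'}(\sigma)$-control of $Rh$ into control against $||f||_{L^p(w)}$) to obtain
\begin{equation*}
\int |T^+f|\,h\,w\,dx \;\leq\; C\,||Rh||_{A_1^+}\log\!\bigl(e+||Rh||_{A_1^+}\bigr)\,||f||_{L^p(w)}.
\end{equation*}
Substituting the bound $||Rh||_{A_1^+}\leq C\,||w||_{A_p^+}$ and using the monotonicity of $t\mapsto t\log(e+t)$ produces the constant $||w||_{A_p^+}\log(e+||w||_{A_p^+})$ claimed in Corollary \ref{coroldebil}.

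The main obstacle is the correct setup of the one-sided Rubio de Francia machinery: one must establish a one-sided Buckley-type estimate for $M^-$ on $L^{p'}(\sigma)$ with the correct dependence on $||w||_{A_p^+}$, and verify that the iterate $Rh$ belongs to $A_1^+$ (with $M^-$, not $M^+$, controlling it) so that Theorem \ref{debil} is applicable with the sharp constant. Once these one-sided tools are in place the argument reduces to the same bookkeeping as in the two-sided proof of \cite{ASC}; the convergence of the series defining $Rh$ in $L^{p'}(\sigma)$ is automatic from the geometric factor.
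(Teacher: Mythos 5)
Your Rubio de Francia iteration is essentially right, and after the change of variable $h\mapsto hw$ it agrees with the paper's construction: the paper iterates $S(h)=w^{-1}M^-(|h|w)$ on $L^{p'}(w)$ and gets $D(h)\cdot w\in A_1^+$ with $||D(h)\cdot w||_{A_1^+}\lesssim p2^p||w||_{A_p^+}$, which is precisely your $Rh\in A_1^+$ bound in disguise; the one-sided Buckley bound you invoke is also the one the paper uses (Lemma \ref{RF}, via \cite{RT1}). So the machinery is the same. The problem is the last step.

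You claim, from Theorem \ref{debil} applied to the $A_1^+$ weight $Rh$ (resp.\ $D(h)w$), the inequality
\begin{equation*}
\int |T^+f|\,h\,w\,dx \;\leq\; C\,||Rh||_{A_1^+}\log\bigl(e+||Rh||_{A_1^+}\bigr)\,||f||_{L^p(w)},
\end{equation*}
but Theorem \ref{debil} is a weak $(1,1)$ estimate: it bounds $\lambda\,v(\{|T^+f|>\lambda\})$, not $\int|T^+f|\,dv$. A weak $(1,1)$ bound does not integrate to a strong $L^1$ bound (the layer-cake integral $\int_0^\infty v(\{|T^+f|>\lambda\})\,d\lambda$ diverges under a pure $1/\lambda$ decay), so the inequality you write down is not a consequence of the theorem and the Lorentz duality route stalls here. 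To rescue it one would need a genuinely extra argument, essentially a restricted weak-type estimate for $\int_E|T^+f|\,w$ with a tail split, which is more work than the problem requires. The paper avoids this entirely: instead of pairing $T^+f$ against Lorentz test functions, it fixes $\alpha>0$, sets $\Omega_\alpha=\{|T^+f|>\alpha\}$, and uses ordinary $L^p$--$L^{p'}$ duality,
\begin{equation*}
w(\Omega_\alpha)^{1/p}\;=\;\sup_{\substack{h\geq 0\\ ||h||_{L^{p'}(w)}\leq 1}}\int_{\Omega_\alpha}h\,w\,dx,
\end{equation*}
and then estimates $\int_{\Omega_\alpha}h\,w\,dx\leq\int_{\Omega_\alpha}D(h)\,w\,dx=(D(h)w)(\Omega_\alpha)$, which is exactly the quantity that the weak $(1,1)$ estimate controls once multiplied by $\alpha$. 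One application of Theorem \ref{debil} with weight $D(h)w$, followed by H\"older in the $w\,dx$ pairing and the bound $||D(h)||_{L^{p'}(w)}\leq 2||h||_{L^{p'}(w)}$, closes the argument. If you replace your Lorentz-duality reduction by this $L^{p'}(w)$ duality applied to $\chi_{\Omega_\alpha}$, and quote the paper's Lemma \ref{RF} in place of re-deriving the iteration by hand, the rest of your proposal goes through.
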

By a duality argument, \textbf{Corollary \ref{coroldebil} } implies the following result:
\begin{corol}\label{corolfuerte}
Let  $1 < p < \infty$, $w\in A^-_p$ and  $T^-$ be a one-side singular integral, then for any measurable set $E$
\begin{equation}
||T^-(\sigma \chi_E)||_{L^{p}(w)} \leq C||w||_{A_p^-}^{\frac{1}{p-1}}\log(e+||w||_{A_p^-})\sigma (E)^{\frac{1}{p}},
 \end{equation}
where $C=C(p,T^-)$ and $\sigma=w^{\frac{-1}{p-1}}$.
\end{corol}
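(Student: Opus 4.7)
The plan is to deduce Corollary \ref{corolfuerte} from Corollary \ref{coroldebil} by a standard weak-type duality argument, combined with the one-sided analogue of the duality relation between $A_p$ and $A_{p'}$.

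First, I would use the $L^p(w)$--$L^{p'}(w)$ duality to write
\begin{equation*}
\|T^-(\sigma\chi_E)\|_{L^p(w)} = \sup_g \int_\R T^-(\sigma\chi_E)(x)\, g(x)\, w(x)\,dx,
\end{equation*}
where the supremum is over $g$ with $\|g\|_{L^{p'}(w)}=1$. Moving $T^-$ across the pairing gives (up to a harmless sign)
\begin{equation*}
\int_\R T^-(\sigma\chi_E)(x)\, g(x)\, w(x)\,dx = \int_E T^+(gw)(x)\,\sigma(x)\,dx,
\end{equation*}
since the formal adjoint of a one-sided singular integral with kernel supported in $(0,\infty)$ is a one-sided singular integral with kernel supported in $(-\infty,0)$.

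Next, I would invoke the standard one-sided analogue of the $A_p$--$A_{p'}$ duality: $w\in A_p^-$ if and only if $\sigma=w^{1-p'}\in A_{p'}^+$, with
\begin{equation*}
\|\sigma\|_{A_{p'}^+} = \|w\|_{A_p^-}^{\,1/(p-1)}.
\end{equation*}
Applying Corollary \ref{coroldebil} to $T^+$ at the exponent $p'$ with the weight $\sigma$ and with $f=gw$ yields
\begin{equation*}
\|T^+(gw)\|_{L^{p',\infty}(\sigma)} \leq C\,\|\sigma\|_{A_{p'}^+}\log(e+\|\sigma\|_{A_{p'}^+})\, \|gw\|_{L^{p'}(\sigma)},
\end{equation*}
and the identity $w^{p'}\sigma = w$ gives $\|gw\|_{L^{p'}(\sigma)}=\|g\|_{L^{p'}(w)}=1$.

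To convert this weak bound into the desired integral bound over $E$, I would use the elementary estimate, valid for any $\sigma$-measurable set $E$ and any $F\in L^{p',\infty}(\sigma)$,
\begin{equation*}
\int_E |F|\,\sigma\,dx \leq C_{p'}\,\sigma(E)^{1/p}\,\|F\|_{L^{p',\infty}(\sigma)},
\end{equation*}
obtained by splitting the layer-cake integral at the level $t_0 = (\|F\|^{p'}_{L^{p',\infty}(\sigma)}/\sigma(E))^{1/p'}$. Combining this with the previous step and taking the supremum over $g$ produces
\begin{equation*}
\|T^-(\sigma\chi_E)\|_{L^p(w)} \leq C\,\|w\|_{A_p^-}^{1/(p-1)} \log\bigl(e+\|w\|_{A_p^-}^{1/(p-1)}\bigr)\,\sigma(E)^{1/p}.
\end{equation*}
Finally, since $\log(e+t^{1/(p-1)})\leq C_p\log(e+t)$, the logarithmic factor simplifies to $\log(e+\|w\|_{A_p^-})$, giving the stated inequality. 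There is no real obstacle here beyond bookkeeping; the only points that require explicit (but routine) verification are the adjoint identity $(T^-)^{\ast}=-T^+$ and the one-sided $A_p^-$/$A_{p'}^+$ duality with the precise constant $\|w\|_{A_p^-}^{1/(p-1)}$, both of which are classical in the Sawyer one-sided theory.
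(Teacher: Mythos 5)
Your argument is correct and follows essentially the same route as the paper: pass to the adjoint $T^+$, use the $A_p^-$/$A_{p'}^+$ duality $\|\sigma\|_{A_{p'}^+}=\|w\|_{A_p^-}^{1/(p-1)}$, apply Corollary~\ref{coroldebil} with the weight $\sigma$, and then convert the weak-$(p',p')$ bound to the stated strong bound on indicator data. The only cosmetic difference is that the paper phrases the last step as duality between $L^{p',\infty}(\sigma)$ and the Lorentz space $L^{p,1}(\sigma)$, whereas you carry out the equivalent layer-cake estimate $\int_E|F|\,\sigma\,dx\leq p\,\sigma(E)^{1/p}\|F\|_{L^{p',\infty}(\sigma)}$ by hand; both yield the same constant dependence.
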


Clearly, every theorem has a corresponding one, reversing the
orientation of $\Bbb R$.

\textbf{Theorems \ref{fuerte}}, \textbf{\ref{debil}} and \textbf{Corollaries \ref{coroldebil}}, \textbf{\ref{corolfuerte}}, for one-sided singular integrals, improve the ones obtained in \cite{ASC}  by putting in the inequalities a wider class of weights (the Sawyer classes) .

The article is organized as follows: in Section 2 we introduce notation, definitions and well known results. In Section 3 we prove some previous lemmas that will be essential to obtain the proofs of Theorems and Corollaries given in Section 4. In Section 5 we give a weaker version and a simplest proof of \textbf{Lemmas \ref{RHI}} and \textbf{\ref{epsilon}} of Section 3.

\section{Preliminaries}

In this section we give some definitions and well known results.
\subsection{One-side singular integral operators and Sawyer's weights}

\begin{defi}
  Let $f\in L_{loc}^1(\re^n)$. The  one-side maximal operators are defined as
\begin{align*}
M^+f(x)=\sup_{h>0}\frac{1}{h}\int_x^{x+h}|f(t)|\,dt , \quad M^-f(x)=\sup_{h>0}\frac{1}{h}\int_{x-h}^{x}|f(t)|\,dt .
\end{align*}
\end{defi}

The good weights for these operators are those of the Sawyer's classes.
We recall this definition.
\begin{defi}
 Let $w$ be a non-negative
locally integrable function and $1\le p<\infty$. We say that $w\in
A_p^+$ if there exists $C_p<\infty$ such that for every $a<x<b$
\begin{equation}
\frac{1}{(b-a)^p}\left(\int_a^xw\right)\left(\int_x^b w^{\frac{-1}{p-1}}\right)^{p-1}\le C_p,
\end{equation}
when $1<p<\infty$, and for $p=1$,
\begin{equation}
M^- w(x)\le C_1\,w(x),
\qquad \mbox{for a.e. }x\in (a,b),
\end{equation}
finally we set $A_\infty^+=\cup_{p\ge 1}A_p^+$.
\end{defi}
The smallest possible $C_1$ in (2.2) here is denoted by $||w||_{A_1^+}$ and the smallest possible $C_p$ in (2.1) here is denoted by $||w||_{A_p^+}$.

 It is well known that the Sawyer classes characterize the boundedness of
the one-sided maximal function on weighted Lebesgue spaces.
Namely, $w\in A_p^+$, $1<p<\infty$, if and only if $M^+$ is bounded on
$L^p(w)$; and $w\in A_1^+$ if and only if $M^+$ maps $L^1(w)$ into
$L^{1,\infty}(w)$. See \cite{S},\cite{R},\cite{ROT}. The classes $A_p^-$ for $1\le p<\infty$  are defined analogously.

We also define
\begin{align*}
M_r^+f(x)=\sup_{h>0}\left(\frac{1}{h}\int_x^{x+h}|f(t)|^r\,dt\right)^\frac{1}{r} , \quad M_r^-f(x)=\sup_{h>0}\left(\frac{1}{h}\int_{x-h}^{x}|f(t)|^r\,dt\right)^\frac{1}{r} ,
\end{align*}
where $r\geq 1$. Observe that $M^+f\leq M_r^+f$ for all $r\geq 1$. Also, we will consider the following maximal operators introduced in \cite{ROT},
\begin{align*}
M_g^+f(x)=\sup_{h>0}\int_x^{x+h}|f(t)|g(t)\,dt\left(\int_x^{x+h}g(t)\,dt\right)^{-1} , \\ M_g^-f(x)=\sup_{h>0}\int_{x-h}^{x}|f(t)|g(t)\,dt\left(\int_{x-h}^{x}g(t)\,dt\right)^{-1} ,
\end{align*}
where $g$ is a positive locally integrable function on $\re$.

The classes $A_p^+(g)$, $1\le p\le \infty$ are defined as  following, let $w$ be non-negative
locally integrable functions and let $1\le p<\infty$. We say that $w\in A_p^+(g)$ if there exists $C_p<\infty$ such that for every $a<x<b$
\begin{equation}\label{Apg}
\left(\int_a^xw\right)\left(\int_x^b g^{p'}\sigma\right)^{p-1}\le C_p\left(\int_a^b g\right)^p,
\end{equation}
where $\sigma=w^{\frac{-1}{p-1}}$, $\frac{1}{p}+\frac{1}{p'}=1$, when $1<p<\infty$, and for $p=1$,
\begin{align*}
M_g^-(g^{-1} w)(x)\le C_1\,g^{-1}w(x),
\qquad \mbox{ a.e. }x\in (a,b).
\end{align*}

In \cite{ROT} it was proved that $w\in A_p^+(g)$, if, and only if $M_g^+$ is bounded from
$L^p(w)$ into $L^p(w)$, for $1<p<\infty$,  and $w\in A_1^+(g)$, if, and only if $M_g^+$ maps $L^1(w)$ into
$L^{1,\infty}(w)$.  Observe that if $g\equiv 1$ then $A_p^+(g)=A_p^+$, for $1\leq p\leq \infty$.

\begin{defi}
We shall say that a function $K$ in  $L^1_{\text{loc}}(\mathbb{R}^n\setminus
\{0\})$ is a Calder\'on-Zygmund kernel if the following properties are satisfied:
\begin{itemize}
\item $||\widehat{K}||_{\infty}<c_1$
\item $|K(x)|<\frac{c_2}{|x|^n}$
\item $|K(x)-K(x-y)|<\frac{c_3|y|}{|x|^{n+1}}$, \quad \mbox{where} $|y|<\frac{|x|}{2}$.
\end{itemize}

The Calder\'{o}n-Zygmund singular integral operator associated to $K$ is defined
\begin{align*}
T(f)=Vp(K \ast f)(x)=\lim_{\epsilon\rightarrow 0} \int_{\re^n\slash B_{\epsilon}(0)}K(x-y)f(y)\,dy
\end{align*}$1<p<\infty$
and the maximal operator  associated with this kernel $K$  is
\begin{align*}
T^*(f)=\sup_{\epsilon> 0} \int_{\re^n\slash B_{\epsilon}(0)}|K(x-y)||f(y)|\,dy
\end{align*}
\end{defi}

A one-sided singular integral $T^+$ is a singular integral associated
to a Calder\'on--Zygmund kernel with support in $(-\infty ,0)$;
therefore, in that case,
$$
T^+f(x)=\lim_{\epsilon\to 0^+}\int_{x+\epsilon}^\infty K(x-y)f(y)\,dy.
$$
Examples of such  kernels are given in \cite{AFR}. In an analogous way we defined $T^-$.

\begin{remark}\label{observacion1}
H. Aimar, L. Forzani and F.J. Mart\'in-Reyes proved in \cite{AFR} that the one-sided singular integral $T^+$ is controlled by the one-side maximal functions $M^+$ in the $L^p(w)$ norm if  $w\in A_\infty^+$.
\end{remark}
\begin{remark}\label{observacion2}
It is  well known to that the classes $ A_p$ are included in $ A_p^+$ and $ A_p^-$; namely  $ A_p= A_p^-\cap A_p^+$.
\end{remark}
\begin{remark}\label{observacion3}
  The one-sided classes of weights satisfy the following  factorization,  $w\in A_p^{+}$ if only if $w=w_1w_2^{1-p}$ with $w_1 \in A_1^+$ and $w_2 \in A_1^-$, and   $||w||_{A_p^+}\leq ||w_1||_{A_1^+} ||w_2||_{A_1^-} ^{p-1}$.
\end{remark}
\begin{remark} \label{observacion4}
  It is easy to check that    $(M^-f)^{\delta}\in A_1^+$ for all $0<\delta<1$ with\\ $||(M^-f)^{\delta}||_{A_1^+}\leq\frac{C}{1-\delta}$.
\end{remark}

\section{Previous Lemmas }\label{section:main}
To obtain \textbf{Theorems  \ref{fuerte}} and  \textbf{\ref{debil}} we need to prove a sharp weak reverse H\"older's inequality  for one-sided weights and also a particular  case of the Coifman-type estimate  for one-sided singular integrals  and one-sided maximal operator.

\subsection{Sharp weak reverse H\"older's inequality}

F.J. Mart\'in-Reyes proved in \cite{R} ( see Lemma 5) a weak reverse H\"older's inequality. Here  we will be more precise in the constants. To make this work self contained  we include the proof of  this Lemma.

\begin{lemma}\label{RHI} {\rm Sharp weak reverse H\"older's inequality.}\\
Let $1\leq p < \infty$,  $w\in A_{p}^+$ and   $r_w=1+\frac{1}{4^{p+2}e^{\frac{1}{e}}||w||_{A_p^+}}$,  when $p>1$, and
$r_w=1+\frac{1}{16e^{\frac{1}{e}}||w||_{A_1^+}}$ when $p=1$,  then

 \begin{equation}
\int_a^b w^{r_w} \leq 2 M^-(w\chi_{(a,b)})(b)^{r_w-1}\int_a^bw,
 \end{equation}
for every bounded interval $(a,b)$, and therefore
\begin{equation}
M^-_{r_w}(w\chi_{(a,b)})(b) \leq 2 M^-(w\chi_{(a,b)})(b).
 \end{equation}
\end{lemma}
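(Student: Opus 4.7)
The plan is to establish the first displayed inequality; the second then follows immediately by applying the first on each sub-interval $(b-h,b)$ with $h\in(0,b-a]$ and using the monotonicity $M^-(w\chi_{(b-h,b)})(b)\le M^-(w\chi_{(a,b)})(b)=:M$, which gives $\tfrac{1}{h}\int_{b-h}^b w^{r_w}\le 2M^{r_w-1}\tfrac{1}{h}\int_{b-h}^b w\le 2M^{r_w}$, so that taking the $r_w$-th root and the supremum over $h$ yields $M_{r_w}^-(w\chi_{(a,b)})(b)\le 2^{1/r_w}M\le 2M$.

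For the reverse H\"older inequality itself I would follow the sharp reverse H\"older scheme of Hyt\"onen--P\'erez adapted to the one-sided maximal operator. Setting $\Omega_\lambda:=\{x\in(a,b):M^-(w\chi_{(a,b)})(x)>\lambda\}$, Lebesgue's differentiation theorem gives $w\le M^-(w\chi_{(a,b)})$ a.e.\ on $(a,b)$, and the layer-cake identity then yields
\[
\int_a^b w^{r_w}\le\int_a^b\bigl(M^-(w\chi_{(a,b)})\bigr)^{r_w-1}w=(r_w-1)\int_0^\infty\lambda^{r_w-2}\,w(\Omega_\lambda)\,d\lambda.
\]
Split at $\lambda=M$: the region $\lambda\le M$ contributes at most $M^{r_w-1}\int_a^b w$ via the trivial $w(\Omega_\lambda)\le\int_a^b w$. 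On the tail $\lambda>M$, $b\notin\Omega_\lambda$ forces the components of $\Omega_\lambda$ to be intervals $(\alpha_j,\beta_j)$ with $\beta_j<b$; evaluating $M^-(w\chi_{(a,b)})(\beta_j)\le\lambda$ at $h=\beta_j-\alpha_j$ produces the one-sided Calder\'on--Zygmund estimate $w((\alpha_j,\beta_j))\le\lambda(\beta_j-\alpha_j)$, so that $w(\Omega_\lambda)\le\lambda|\Omega_\lambda|$. The $A_p^+$ hypothesis is then invoked to promote this into a genuine super-linear decay of $|\Omega_\lambda|$ of the form $|\Omega_\lambda|\lesssim(b-a)(M/\lambda)^{1+\eta}$ with $\eta=\eta(p,\|w\|_{A_p^+})>0$, by applying the $A_p^+$ inequality on intervals $(\alpha_j,b)$ (so that $\beta_j$ is interior) together with H\"older's inequality and iterating. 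Plugging this decay into the tail integral bounds it by $M^{r_w-1}\int_a^b w$ once $r_w-1$ is sufficiently small relative to $\eta$, and summing with the easy region gives the factor $2$ on the right-hand side.

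The principal obstacle is the jump in decay exponent from $1$ to $1+\eta$. The weak-$(1,1)$ bound for $M^-$ together with $w((\alpha_j,\beta_j))\le\lambda(\beta_j-\alpha_j)$ only yields the borderline $|\Omega_{K\lambda}|\le K^{-1}|\Omega_\lambda|$, and the resulting tail integral diverges for $r_w>1$; the extra exponent $\eta$ must therefore be extracted from the $A_p^+$ condition itself. The case $p=1$ is more transparent, since $M^-w\le\|w\|_{A_1^+}w$ gives a pointwise lower bound $w>\lambda/\|w\|_{A_1^+}$ on $\Omega_\lambda$ and hence the improved decay directly, accounting for the slightly better constant $16e^{1/e}$; for $p>1$ the integral form of the $A_p^+$ inequality has to be tracked more carefully, producing $4^{p+2}e^{1/e}$. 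The factor $e^{1/e}=\max_{t>0}t^{1/t}$ arises naturally in the final optimization in $r_w$, which is tuned so that the coefficient multiplying $M^{r_w-1}\int_a^b w$ is forced down to exactly $2$.
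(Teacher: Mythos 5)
Your reduction of the second inequality to the first (apply the first on each $(b-h,b)$, use the monotonicity $M^-(w\chi_{(b-h,b)})(b)\le M^-(w\chi_{(a,b)})(b)=:M$, divide by $h$ and take the supremum) is correct and is exactly what the paper leaves implicit in ``and therefore''. For the main inequality, however, your argument has a real gap. Following the Hyt\"onen--P\'erez layer-cake scheme you write $\int_a^b w^{r_w}\le(r_w-1)\int_0^\infty\lambda^{r_w-2}w(\Omega_\lambda)\,d\lambda$ and split at $\lambda=M$; the region $\lambda\le M$ does give $M^{r_w-1}\int_a^b w$ as you say. But the tail rests entirely on the asserted decay $|\Omega_\lambda|\lesssim(b-a)(M/\lambda)^{1+\eta}$, which you claim follows from $A_p^+$ ``by iterating'' without giving an argument; the ingredients you actually exhibit (the one-sided Calder\'on--Zygmund identity $w(\Omega_\lambda)\le\lambda|\Omega_\lambda|$, the weak-$(1,1)$ bound, and the pointwise estimate $w>\lambda/||w||_{A_1^+}$ on $\Omega_\lambda$ when $p=1$) only yield the borderline $1/\lambda$ decay, as you note yourself. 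Moreover, even granting the stated decay, its form cannot close the argument: the tail integral evaluates to $\frac{r_w-1}{1+\eta-r_w}(b-a)M^{r_w}$, and to dominate this by $M^{r_w-1}\int_a^b w$ you would need $(b-a)M\lesssim\int_a^b w$; but $M=M^-(w\chi_{(a,b)})(b)$ always satisfies $(b-a)M\ge\int_a^b w$ and can be arbitrarily much larger (think of $w$ concentrated near $b$). Any usable decay must be normalized by $\int_a^b w$, not by $(b-a)M$, and producing such a bound is precisely the hard step you have skipped.

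The paper sidesteps the decay estimate entirely and uses the classical distributional/absorption scheme for reverse H\"older. It establishes, for $\lambda>M$, the inequality $w(\{x\in(a,b):w>\lambda\})\le 2\lambda\,|\{x\in(a,b):w>\beta\lambda\}|$ with $\beta=(4^p||w||_{A_p^+})^{-1}$ for $p>1$ (via Lemma~4 of Mart\'in-Reyes, which shows that $\{w>\beta\lambda\}$ fills at least half of each component of $\Omega_\lambda$) and $\beta=||w||_{A_1^+}^{-1}$ for $p=1$ (via the pointwise bound you mention). It then multiplies by $\lambda^{\delta-1}$ and integrates over $\lambda>\lambda_0$: the right-hand side reproduces $\frac{2\delta}{(1+\delta)\beta^{1+\delta}}\int_a^b w^{1+\delta}$, which is absorbed back into the left-hand side once $\delta$ is chosen small enough. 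The self-improvement thus comes from this absorption, not from any quantitative decay of $\Omega_\lambda$, and it is also where the constants $4^{p+2}e^{1/e}$, $16e^{1/e}$ and the final factor $2$ are extracted. This bootstrap step is the missing idea in your proposal.
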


\begin{proof} Let $(a,b)$ be a bounded interval, $1\leq p<\infty$ and $w\in A_p^+$. First we will  prove the following statement: let $\lambda>\lambda _0=M^-(w\chi_{(a,b)})(b)$ then

\begin{equation}\label{statement}
w\{x\in(a,b):w(x)>\lambda\}\leq 2\lambda\left|\{x\in(a,b):w(x)>\beta\lambda\}\right|,
 \end{equation}
 where  $\beta=(4^p ||w||_{A_p^+})^{-1}$, for $1<p<\infty$ and $\beta=(||w||_{A_1^+})^{-1}$, for $p=1$.

Let $$\Omega_\lambda=\{x\in (a,b): M^-(w\chi_{(a,b)})(x)>\lambda\}$$ and let ${I_j}=(a_j,b_j)$ be the maximal disjoint intervals whose union give $\Omega_\lambda$. Then $\lambda=\frac{1}{|I_j|}\int_{I_j}w \leq\frac{1}{x-a_j}\int_{a_j}^x w $ for all $x\in(a_j,b_j)$. ( See Lemma 2.1 in \cite{S}).

 Let $1<p<\infty$. In \cite{R} (see Lemma 4) the author prove that if $\lambda>0$ then there exist  $\alpha>0$ and $\beta>0$ such that
\begin{equation}\label{lemma4}
\left|\{x\in(a,b):w(x)>\beta\lambda\}\right|>\alpha(b-a).
 \end{equation}
Following the same steps as in that proof, we observe that if we choose $\beta=(4^p ||w||_{A_p^+})^{-1}$ then $\alpha=\frac 12$. Then using (\ref{lemma4}) for this $\beta$ we get

\begin{align*}
w(\{x\in(a,b):w(x)>\lambda\})&\leq w(\Omega_\lambda)
= \sum_j \int_{I_j}w=\lambda\sum_j (b_j-a_j)\\
&\leq2\lambda\sum_j \left|\{x\in(a_j,b_j):w(x)>\beta\lambda\}\right|\\
&\leq 2\lambda \left|\{x\in(a,b):w(x)>\beta\lambda\}\right|.
\end{align*}

Now for $p=1$,
\begin{align*}
w(\{x\in(a,b):w(x)>\lambda\})&\leq w(\Omega_\lambda)=\sum_j\int_{I_j}w=\lambda\sum_j|I_j|\\
&=\lambda\left|\Omega_\lambda\right| \leq \lambda\left|\{x\in(a,b):w(x)>\beta\lambda\}\right|,
\end{align*}
where the last inequality holds  with $\beta=(||w||_{A_1^+})^{-1}$ and the fact that $w\in A_1^+$.

Now multiplying the inequality (\ref{statement}) by $\lambda^{\delta-1}$ and integrating for $\lambda\geq\lambda_0$ we get,
\begin{align*}
\int_{\lambda_0}^{\infty}\lambda^{\delta-1}\int_{\{x\in(a,b):w(x)>\lambda\}}w(x)\,dx\,d\lambda&\leq2\int_{0}^{\infty}\lambda^{\delta}\left|\{x\in(a,b):w(x)>\beta\lambda\}\right|\,d\lambda\\
&=2\int_{0}^{\infty}\left(\frac{t}{\beta}\right)^{\delta}\frac{1}{\beta}\left|\{x\in(a,b):w(x)>t\}\right|\,dt\\
&=\frac{2}{\beta^{\delta+1}}\int_a^b\int_0^{w(x)}t^{\delta}\,dt\,dx\\
&=\frac{2}{(1+\delta)\beta^{\delta+1}}\int_a^b w(x)^{1+\delta}\,dx
\end{align*}
 with $\delta$  to determinate. On the other hand,
\begin{align*}
\int_{\lambda_0}^{\infty}\lambda^{\delta-1}\int_{\{x\in(a,b):w(x)>\lambda\}}w(x)\,dx\,d\lambda&=\int_{\{x\in(a,b):w(x)>\lambda_0\}}w(x)\int_{\lambda_0}^{w(x)}\lambda^{\delta-1}\,d\lambda\,dx\\
&=\int_{\{x\in(a,b):w(x)>\lambda_0\}}w(x)\frac{1}{\delta}\left(w(x)^{\delta}-\lambda_0^{\delta}\right)\\
&\geq \frac{1}{\delta} \int_a^b w(x)^{1+\delta}\,dx-\frac{\lambda_0^{\delta}}{\delta}\int_a^b w(x)\,dx,
\end{align*}
therefore
\begin{align*}
\left(1-\frac{2\delta}{(1+\delta)\beta^{\delta+1}}\right)\int_a^b w(x)^{1+\delta}\,dx\leq \lambda_0^{\delta} \int_a^b w(x)\,dx.
\end{align*}
If $p>1$ we choose  $\delta=(4^{p+2}e^{\frac{1}{e}}||w||_{A_p^+})^{-1}$ and  if  $p=1$ we choose  $\delta=(16e^{\frac{1}{e}}||w||_{A_1^+})^{-1}$. Then using  that $t^\frac{1}{t}\leq e^\frac{1}{e}$ for all $t\geq 1$,  we get
\begin{align*}
1-\frac{2\delta}{(1+\delta)\beta^{\delta+1}}\geq\frac{1}{2}
\end{align*}
obtaining the desired result with $r_w=1+\delta$.
\end{proof}

\begin{lemma}\label{epsilon}
Let $1<p<\infty$, $w\in A_p^-$, and $a<b<c$ such that $E\subseteq (b,c)$, for $E$ a  measurable set. For all $\epsilon > 0$, there exists $C=C(\epsilon,p)$ such that if $\left|E\right|<e^{-C||w||_{A_p^-}}(b-a)$ then $w(E)<\epsilon w(a,c)$.
\end{lemma}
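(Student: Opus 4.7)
The plan is to apply the reverse H\"older inequality---the $A_p^-$ analog of Lemma~\ref{RHI}, obtained by symmetry so that $M^+$ replaces $M^-$ and the orientation is reversed---on the interval $(b,c)$, together with H\"older's inequality on $E$, and then use the $A_p^-$ condition itself to control the resulting $M^+$-factor.

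First I would invoke the $A_p^-$ analog of Lemma~\ref{RHI}: with $\delta=(4^{p+2}e^{1/e}||w||_{A_p^-})^{-1}$,
\[
\int_b^c w^{1+\delta}\leq 2\,M^+(w\chi_{(b,c)})(b)^{\delta}\,w(b,c),
\]
the critical point being that $||w||_{A_p^-}\cdot\delta = c_p^{-1}$ depends only on $p$ (where $c_p=4^{p+2}e^{1/e}$). H\"older's inequality with conjugate exponents $1+\delta$ and $(1+\delta)/\delta$ applied to $E\subseteq(b,c)$ then gives
\[
w(E)^{1+\delta}\leq |E|^{\delta}\int_E w^{1+\delta}\leq 2|E|^{\delta}M^+(w\chi_{(b,c)})(b)^{\delta}\,w(b,c).
\]
Substituting the hypothesis $|E|<e^{-C||w||_{A_p^-}}(b-a)$ and using the identity $||w||_{A_p^-}\delta=c_p^{-1}$ collapses the $||w||_{A_p^-}$-dependence of $|E|^\delta$ to $e^{-C/c_p}(b-a)^\delta$---a pure function of $C$ and $p$.

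The main obstacle is then to bound the remaining quantity $\bigl((b-a)M^+(w\chi_{(b,c)})(b)\bigr)^{\delta}w(b,c)$ by a constant multiple of $w(a,c)^{1+\delta}$. My plan here is to apply the $A_p^-$ condition on the subintervals $(a,b+h)$ at $x=b$---namely $w(b,b+h)\sigma(a,b)^{p-1}\leq ||w||_{A_p^-}(b+h-a)^p$, where $\sigma=w^{-1/(p-1)}$---together with the H\"older inequality $(b-a)^p\leq w(a,b)\sigma(a,b)^{p-1}$, in order to control $(b-a)M^+(w\chi_{(b,c)})(b)$ by a quantity polynomial in $||w||_{A_p^-}$ and linear in $w(a,c)$. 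The key subtlety (and the crux of the whole argument) is that when such a bound is raised to the $\delta$-power, any factor $||w||_{A_p^-}^{\alpha}$ becomes $\exp\bigl(\alpha\log||w||_{A_p^-}/(c_p\,||w||_{A_p^-})\bigr)$, which is bounded uniformly in $||w||_{A_p^-}$. Combining the pieces yields $w(E)\leq C_1(p)\,e^{-C/(2c_p)}w(a,c)$, and choosing $C=C(\epsilon,p)$ sufficiently large gives the desired conclusion $w(E)<\epsilon\,w(a,c)$.
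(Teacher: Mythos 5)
Your first two steps (the one-sided reverse H\"older inequality and H\"older's inequality with exponents $1+\delta$, $(1+\delta)/\delta$) are correct, but the step you label the ``main obstacle'' is a genuine obstruction, not a technicality that more careful bookkeeping resolves: for arbitrary $a<b<c$ the quantity $(b-a)\,M^+(w\chi_{(b,c)})(b)$ simply cannot be bounded by a power of $||w||_{A_p^-}$ times $w(a,c)$. Writing out precisely the ingredients you propose --- the $A_p^-$ inequality on $(a,b+h)$ at $x=b$ together with H\"older's estimate $\sigma(a,b)^{p-1}\geq (b-a)^p/w(a,b)$ --- yields only $\frac{1}{h}\int_b^{b+h}w\leq ||w||_{A_p^-}\frac{(b+h-a)^p}{h(b-a)^p}\,w(a,b)$, and the right-hand side blows up like $1/h$ as $h\to 0$; the supremum over $h$ defining $M^+$ is therefore uncontrolled. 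Worse, for a weight with a mild logarithmic singularity at $b$ (say $w(t)\asymp\log\frac{1}{|t-b|}$ near $b$, which one can check has bounded $A_p^-$ constant) one has $M^+(w\chi_{(b,c)})(b)=\infty$, so your key inequality $w(E)^{1+\delta}\leq 2|E|^\delta M^+(w\chi_{(b,c)})(b)^\delta w(b,c)$ becomes vacuous. The $\delta$-power trick tames factors of the form $||w||_{A_p^-}^{\alpha}$, but it cannot tame a quantity that is unbounded (or infinite) independently of $||w||_{A_p^-}$, so no choice of $C=C(\epsilon,p)$ rescues the argument.

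The paper's proof circumvents the pointwise estimate entirely. It reads the reverse H\"older inequality at every $x\in(a,b)$ as a \emph{lower} bound on $M^+(w\chi_{(a,c)})(x)$, so the whole interval $(a,b)$ sits inside a level set of $M^+(w\chi_{(a,c)})$, and the Lebesgue weak $(1,1)$ inequality for $M^+$ converts this into $(b-a)\big(\int_b^c w^r\big)^{r'-1}\lesssim w(a,c)^{r'}$. This is exactly the statement $1\in A_{r'}^+(w)$ with an absolute constant, and by the Mart\'in-Reyes--Ortega--de la Torre theorem it gives weak $(r',r')$ boundedness of the local maximal operator $M_w^+$ with respect to Lebesgue measure. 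Applying that weak-type bound to $\chi_E$ produces $\big(w(E)/w(a,c)\big)^{r'}\lesssim |E|/(b-a)$, and only at this point is the smallness of $1/r'\sim\delta$ used to cancel $||w||_{A_p^-}$ in the exponent --- the same cancellation you identified, but applied to a quantity that is genuinely controlled. The missing idea in your proposal is therefore this detour through $A_{r'}^+(w)$ and the weighted local maximal function $M_w^+$. It is worth noting that your H\"older-plus-reverse-H\"older route \emph{does} close once one imposes a fixed length ratio such as $c-b=2(b-a)$: in that case the averaged reverse H\"older inequality of Lemma~\ref{RHI2} (applied to $A_p^-$) replaces the unbounded $M^+$ factor by $\big(\frac{1}{c-a}\int_a^c w\big)^r$ with an absolute constant, and everything goes through. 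That is precisely the content of Lemma~\ref{epsilon2}, which the paper presents in the Appendix as an easier proof of a slightly weaker version of Lemma~\ref{epsilon}.
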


\begin{proof}
Let $w\in A_p^-$  we apply the analogous   to  \textbf{Lemma \ref{RHI}}, i.e.
\begin{align*}
\int_b^c w^r \leq 2 M^+(w\chi_{(b,c)})(b)^{r-1}\int_b^cw,
\end{align*}
this implies
\begin{align*}
(M_w^+(w^{r-1}\chi_{(b,c)})(b))^{\frac{1}{r-1}}\leq 2 M^+(w\chi_{(b,c)})(b),
\end{align*}
where we take  $r=r_w=1+\frac{1}{4^{p+2}e^{\frac{1}{e}}||w||_{A_p^-}}$,  when $p>1$ and $r=r_w=1+\frac{1}{16e^{\frac{1}{e}}||w||_{A_1^-}}$,
when $p=1$.\\

Using the definition of $M_g^+$, with $g=w$ we have that for all $x\in (a,b)$

\begin{align*}
\left (\frac{1}{w(a,c)}\int_b^c w^{r-1}w\right)^{\frac{1}{r-1}} &\leq (M_w^+(w^{r-1}\chi_{(x,c)})(x))^{\frac{1}{r-1}}\\
&\leq2 M^+(w\chi_{(x,c)})(x)\\
&\leq2 M^+(w\chi_{(a,c)})(x),
\end{align*}
then  $$(a,b)\subseteq\{x:M^+(w\chi_{(a,c)})(x)>\frac{1}{4}\left(\frac{1}{w(a,c)}\int_b^c w^{r}\right)^{\frac{1}{r-1}}\}.$$
Recalling that $M^+$ is of weak type $(1,1)$ with respect to the Lebesgue measure, we get

\begin{align*}
b-a < 12 w(a,c)^{\frac{1}{r-1}}\left(\int_b^c w^{r}\right)^{\frac{-1}{r-1}}w(a,c).
\end{align*}
This last inequality says that $1\in A_{r'}^+(w)$ (where $r'$ is such that $\frac 1r+\frac 1{r'}=1$), with constant $12$, see (\ref{Apg}).
Let $x\in (a,b)$, by hypothesis $E\subset (b,c)$ then

\begin{align*}
M_w^+ (\chi_{E}(x))&\geq \frac{1}{w(x,c)}\int_x^c \chi_{E}(t)w(t)\,dt \geq \frac{w(E)}{w(a,c)},
\end{align*}
so the interval $(a,b)\subset\{x: M_w^+ (\chi_{E}(x))>\frac{w(E)}{2w(a,c)}\}$. Observe that $M_w^+$ is of weak type $(r',r')$ with respect to the Lebesgue measure with constant $K$, (see \cite{ROT}), then
\begin{align*}
  b-a\leq\left|\left\{x: M_w^+ (\chi_{E}(x))>\frac{w(E)}{2w(a,c)}\right\}\right|\leq K \left(\frac{2w(a,c)}{w(E)}\right)^{r'}|E|.
\end{align*}

% As $r=1+\frac{1}{4^{p+2}e^{\frac{1}{e}}||w||_{A_p^-}}$($p>1$) and $r=1+\frac{1}{16e^{\frac{1}{e}}||w||_{A_p^-}}$,( $p=1$) and
 Taking into account that $1<r<2$ and $K^{\frac{1}{r'}}\leq \xi$, where $\xi$ not depend on  $p$ nor $||w||_{A_p^-}$ (see \cite{ROT}), then
\begin{equation}\label{epsilon-L3.2}
\frac{w(E)}{w(a,c)}<\left(K\frac{\left|E\right|}{b-a}\right)^{\frac{1}{r'}}<\xi e^{\frac{-C||w||_{A_p^-}}{r'}}<\epsilon,
\end{equation}
where the last inequality holds by choosing an appropriate $C$ depending  only on $ p $.
\end{proof}

\subsection{The Coifman-type estimate }

Now we give a particular case of the Coifman-type estimate. In order to do this  we need a kind of $good-\lambda$ inequality result.

\begin{lemma}\label{buenoslambda}
Let $1 \leq p < \infty$,  $w\in A^-_p$, $T^-$ be a one-side singular integral and  $T^*$ the maximal operator related to  $T^-$.   Then there exist positive constants  $c_1, c_2,$  $\gamma_0 >0$ such that  for every  $0<\gamma<\gamma_0$
\begin{align*}
\left|\{x\in\re: T^*f(x)>2\lambda, M^-f(x)<\gamma\lambda\}\right|<c_1e^{-\frac{c_2}{\gamma}}\left|\{T^*f(x)>\lambda\}\right|
\end{align*}
holds for $f\in L^1(\mathbb{R})$ , $\lambda>0$.  Also,  for  all $\epsilon>0$ , there exists  $c'$ depending  on  $\epsilon,\gamma_0$ and $p$ such that
\begin{align*}
w\left(\left\{x\in\re: T^*f(x)>2\lambda, M^-f(x)<\frac{c'\lambda}{||w||_{A_p^-}}\right\}\right)<\epsilon w(\{T^*f(x)>\lambda\}).
\end{align*}
\end{lemma}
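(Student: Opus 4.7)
The plan is to establish first an unweighted good-$\lambda$ inequality with exponential decay, and then to promote it to the weighted estimate using Lemma \ref{epsilon}.

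Since $T^*f$ is lower semicontinuous, the set $\Omega_\lambda := \{T^*f > \lambda\}$ is open; decompose it into its maximal open intervals $\{I_j = (a_j,b_j)\}$, so that $T^*f(a_j)\le\lambda$. Fix $I_j$, set $h = b_j - a_j$, and split $f = f_1 + f_2$ with $f_2 := f\chi_{(a_j - 2h,\, b_j)}$. Because $K$ is supported in $(0,\infty)$, the value $T^-_\varepsilon f(y)$ depends only on $f$ in $(-\infty, y-\varepsilon)$, so $T^*f_1(x)$ is an absolutely convergent integral for every $x\in I_j$, and a dyadic estimate based on the Hölder regularity of $K$ yields the pointwise comparison
\[
T^*f_1(x)\le T^*f(a_j) + C\,M^-f(x)\le \lambda + C\,M^-f(x),\qquad x\in I_j.
\]
Thus any $x\in I_j$ with $T^*f(x) > 2\lambda$ and $M^-f(x) < \gamma\lambda$ (with $\gamma$ below a suitable $\gamma_0$) must satisfy $T^*f_2(x) > \lambda/2$. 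To control the size of this set perform a Calderón-Zygmund decomposition of $f_2$ at level $\gamma\lambda$: the bad part is absorbed by the total length of its supporting intervals (which is $\lesssim h$), and the good part $g$ satisfies $\|g\|_\infty \le C\gamma\lambda$. The classical linear $L^p$ bound $\|T^*g\|_{L^p}\le Cp\,\|g\|_{L^p}$, together with Markov's inequality and an optimization in $p\simeq 1/\gamma$, then gives $|\{x\in I_j:\ T^*g(x)>\lambda/8\}|\le c_1 e^{-c_2/\gamma}|I_j|$; summing over $j$ completes the unweighted inequality.

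For the weighted part, fix $\epsilon>0$ and take $\gamma = c'\|w\|_{A_p^-}^{-1}$ with $c'$ to be chosen small. The unweighted bound applied inside each $I_j$ reads
\[
|E_j|\le c_1 e^{-c_2\|w\|_{A_p^-}/c'}|I_j|,\qquad E_j:=\{x\in I_j:\, T^*f(x)>2\lambda,\, M^-f(x)<\gamma\lambda\}.
\]
For each $j$, pick $\tilde a_j<a_j$ with $a_j-\tilde a_j$ comparable to $|I_j|$ via a Whitney-type selection inside $\Omega_\lambda^c$. Choosing $c'$ small enough (depending on $\epsilon$ and $p$) ensures $|E_j|<e^{-C\|w\|_{A_p^-}}(a_j-\tilde a_j)$, so that Lemma \ref{epsilon} with $a=\tilde a_j,\, b=a_j,\, c=b_j$ delivers $w(E_j)<\epsilon'\,w(\tilde a_j, b_j)$. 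Since $(\tilde a_j, a_j)\subseteq \Omega_\lambda^c$ and the family $\{(\tilde a_j, b_j)\}_j$ has bounded overlap, one has $\sum_j w(\tilde a_j, b_j)\le K\,w(\Omega_\lambda)$, and the conclusion follows upon readjusting $\epsilon'$. The boundary case $p=1$ reduces to some fixed $p>1$ via the embedding $A_1^-\subseteq A_q^-$.

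The main difficulty is the exponential decay $e^{-c_2/\gamma}$ in the unweighted step: the comparison of the far part $f_1$ is routine, but upgrading the natural polynomial decay to an exponential one requires the sharp $L^p$ bound $\|T^*\|_{L^p\to L^p}\lesssim p$ (equivalently, a BMO / John--Nirenberg argument) applied to the bounded part of the Calderón-Zygmund decomposition of $f_2$. A secondary bookkeeping issue is arranging the auxiliary intervals $(\tilde a_j, b_j)$ in the second step so that bounded overlap holds simultaneously with the hypothesis of Lemma \ref{epsilon}.
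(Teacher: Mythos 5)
Your overall strategy---a localized Calder\'on--Zygmund decomposition to get an unweighted exponential good-$\lambda$ inequality, then promotion to the weighted statement via Lemma~\ref{epsilon}---matches the structure of the paper's argument, but two steps have genuine gaps.

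First, the treatment of the bad part of the Calder\'on--Zygmund decomposition is not correct as stated. After decomposing the near part at height $\gamma\lambda$ you write that ``the bad part is absorbed by the total length of its supporting intervals (which is $\lesssim h$).'' That total length is indeed comparable to $h=|I_j|$, but this is \emph{not} exponentially small relative to $|I_j|$, so discarding the dilated bad cubes does not yield the required bound $c_1e^{-c_2/\gamma}|I_j|$. The paper instead uses two ideas that your sketch is missing: (i) if $x$ lies in a dilated bad interval $\widetilde I_k$ then $M^-f(x)\ge\gamma\lambda$, so such points do not belong to the set under consideration, and (ii) for $x$ outside $\widetilde\Omega$ one has the pointwise estimate $|T^*h(x)|\lesssim \gamma\lambda\,\Delta(x)$ with $\Delta(x)=\sum_k \delta_k^2/(\delta_k^2+(x-b_k)^2)$, and one invokes Carleson's exponential estimate $|\{\Delta>c/\gamma\}|\lesssim e^{-c/\gamma}|I_j|$. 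The sharp $L^p\to L^p$ bound (Buckley's Lemma~2.11) handles only the bounded part $g$; both ingredients (i) and (ii) are needed for $h$ and cannot be replaced by a measure count of the bad cubes. Relatedly, you should truncate the near part at a level $\xi=\sup\{x\in I_j: M^-f(x)\le\gamma\lambda\}$ rather than all the way to $b_j$: this is what gives $\int f_2\lesssim \gamma\lambda\,h$ and hence the size bound on the bad cubes; with your $f_2=f\chi_{(a_j-2h,b_j)}$ there is no a priori control of $\|f_2\|_1$.

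Second, in the weighted step the estimate $\sum_j w(\tilde a_j,b_j)\le K\,w(\Omega_\lambda)$ does not follow from bounded overlap of the intervals $(\tilde a_j,b_j)$, because these intervals reach outside $\Omega_\lambda$: bounded overlap gives $\sum_j w(\tilde a_j,b_j)\le K\,w\bigl(\bigcup_j(\tilde a_j,b_j)\bigr)$, and the union can have $w$-mass far exceeding $w(\Omega_\lambda)$. Moreover there is no Whitney selection guaranteeing $a_j-\tilde a_j\simeq |I_j|$ with $(\tilde a_j,a_j)\subset \Omega_\lambda^c$, since the gap to the left of a component may be arbitrarily short. The paper avoids this entirely by refining each component $J=(a,b)$ into the dyadic pieces $(x_{i+1},x_i)$ with $x_i-a=2^{-i}(b-a)$, applying Lemma~\ref{epsilon} with the anchor $(x_{i+2},x_{i+1})$, which stays \emph{inside} $J$; the resulting bound is $w(E\cap(x_{i+1},x_i))<\epsilon\,w(x_{i+2},x_i)$, and summing over $i$ only costs a factor $2$. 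You should adopt this internal dyadic decomposition rather than extending left of each component.
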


\begin{proof}
Since the set $\{x: T^*f(x)>\lambda\} $ is open and has finite measure, for $f\in L^1(\re)$ , then  it can be written as a disjoint countable union of open intervals. Let $J=(a,b)$ be such an interval. It is enough to prove that there exist $c_1$, $c_2$, $c'$ and $\gamma_0$ such that
\begin{align*}
\left|\{x\in J: T^*f(x)>2\lambda, M^-f(x)<\gamma\lambda\}\right|<c_1e^{-\frac{c_2}{\gamma}}\left|J\right|,
\end{align*}
and
\begin{align*}
w\left(\left\{x\in J: T^*f(x)>2\lambda, M^-f(x)<\frac{c'\lambda}{||w||_{A_p^-}}\right\}\right)<\epsilon w(J),
\end{align*}
for every $0<\gamma<\gamma_0$ and every $\lambda>0$. Let us now take a sequence $\{x_i\}_{i=0}^{\infty}$ in $J=(a,b)$ in such a way that  $x_0=b$ and $x_{i-1}-x_{i}=x_{i}-a$ for every $i>0$. Observe that we only need to prove that
\begin{equation}\label{3.6}
\left|\{x\in (x_{i+1},x_{i}): T^*f(x)>2\lambda, M^-f(x)<\gamma\lambda\}\right|<c_1e^{-\frac{c_2}{\gamma}}(x_{i+1}-x_{i+2}).
\end{equation}

Then using \textbf{Lemma }\ref{epsilon} there exists $c'$ depending on $\epsilon,\gamma_0,p,c_1,c_2$, such that
\begin{equation}
w\left(\left\{x\in (x_{i+1},x_{i}): T^*f(x)>2\lambda, M^-f(x)<\frac{c'\lambda}{||w||_{A_p^-}} \right\}\right) <\epsilon w(x_{i},x_{i+2}).
\end{equation}

 Let us show (\ref{3.6}). Let $i\in \mathbb{N}$, if $\{x\in (x_{i+1},x_{i}): T^*f(x)>2\lambda, M^-f(x)<\gamma\lambda\}=\emptyset$ there is nothing to prove. We choose $\overline{a}<a$ such that $x_i-a=a-\overline{a}$ and
 \begin{align*}
 \xi=\sup\{x\in(x_{i+1},x_i): M^-f(x)\leq\gamma\lambda\}.
 \end{align*}

  Let us write $f=f_1+f_2$ with $f_1=f\chi{(\overline{a},\xi)}$ then
\begin{align*}
&\quad \quad\quad \quad\{x\in (x_{i+1},x_{i}): T^*f(x)>2\lambda, M^-f(x)<\gamma\lambda\}\subset\\
&\{\!x\!\in \!(x_{i+1},\xi)\!: \!\!T^*f_1(x)\!>\!\frac{1}{2}\lambda\!, M^-f(x)\!<\!\gamma\lambda\!\}\!\cup\!\left\{\!x\!\in \!(x_{i+1},\xi)\!:\! T^*f_2(x)\!>\!\frac{3}{2}\lambda, M^-f(x)\!<\!\gamma\lambda\!\right\}.
\end{align*}

The first set on the right term of the formula is essentially empty for $\gamma$ small enough. By standard estimation (see \cite{AFR}), we get that for $x\in(x_{i+1},\xi)$, $T^*f_2(x)\leq\frac{3}{2}\lambda$ then
 $$\left\{x\in (x_{i+1},\xi): T^*f_2(x)>\frac{3}{2}\lambda, M^-f(x)<\gamma\lambda\right\}=\emptyset,$$
 for $0<\gamma<\gamma_0$ small enough.

Now we work with $f_1$. Let $\Omega=\{x\in (x_{i+1},\xi): M^-f_1(x)>3\gamma\lambda\}$, observe that
 \begin{align*}
\int_{\re} f_1(t)\,dt\leq 4\gamma\lambda(x_i-x_{i+1}).
\end{align*}
The last inequality implies that $\Omega\subset(\overline{a},\widetilde{a})$ with $\widetilde{a}-\xi=\frac{4}{3}(x_i-x_{i+1})$. Let us write $\Omega=\bigcup I_j $ where $I_j=(a_j,b_j)$ are disjoint maximal intervals. Then
 \begin{align*}
\frac{1}{\left|I_j\right|}\int_{I_j} f_1(t)\,dt=3\gamma\lambda.
\end{align*}
We define ${I}^+_j=(b_j,c_j)$, $\left|{I}^+_j\right|=2\left|I_j\right|$, $\widetilde{\Omega}=\bigcup ({I}^+_j\cup I_j)= \bigcup \widetilde{I}_j$ and $f_1=g+h$ with
\begin{align*}
g=f_1\chi_{\re/\Omega}+\sum_j 3\gamma\lambda\chi_{I_j}, \qquad h=\sum_j h_j=\sum_j (f_1-3\gamma\lambda)\chi_{I_j}.
\end{align*}
 Observe that $g\leq3\gamma\lambda$ and is supported in $(\overline{a},\widetilde{a})$. Then using \textbf{Lemma 2.11} in \cite{B} i.e.\\
 \\
\emph{Suppose $g\in L^{\infty}(I)$ and that $T$ is an operator for which
 $$|\{x:T\phi(x)>\alpha\}|\leq \left(\frac{C_p||\phi||_p}{\alpha} \right)$$
for all $\phi\in L^p(\re)$ and sufficiently large $p$ and $\alpha$, $C$ being a constant independent of $p$.\\
Then
$$|\{x:Tg(x)>\alpha\}|\leq C e^{\frac{\alpha}{e||g||_{\infty}}}|I|,$$}
 \\
 we have
\begin{align*}
\left|\left\{x:T^*g(x)>\frac{\lambda}{4}\right\}\right|\leq e^{\frac{-c}{\gamma}}(\widetilde{a}-\overline{a})\leq \frac{32}{3} e^{\frac{-c}{\gamma}}(x_{i+1}-x_{i+2}).
\end{align*}

Now let us study $T^*h$ for $x\notin\widetilde{\Omega}$,
\begin{align*}
\left|T^*h(x)\right|&\leq\sum_j \int_{I_j}\left|h_j (y)(K(x-y)-K(x-b_j))\right|\,dy\\
&\leq C_{T^-}\sum_j \int_{I_j}\left|h_j (y)\right|\frac{y-b_j}{(x-b_j)^2}\,dy\\
&\leq\frac{3}{2}C_{T^-}\sum_j\frac{\delta_j}{\delta_j^2+(x-b_j)^2} \int_{I_j}\left|h_j (y)\right|\,dy\\
&\leq9 C_{T^-}\gamma\lambda\sum_j\frac{\delta_j}{\delta_j^2+(x-b_j)^2}\left|I_j\right|\\
&\leq C \gamma\lambda\sum_j\frac{\delta_j^2}{\delta_j^2+(x-b_j)^2},
\end{align*}
where $\delta_j=c_j-a_j$. We write $\Delta(x)=\sum_j\frac{\delta_j^2}{\delta_j^2+(x-b_j)^2}$.

Observe that if $x\in \widetilde{\Omega}$ then $M^-f(x)\geq\gamma\lambda$. In fact, if $x\in I_j$, for some $j$, then by definition of $\Omega$ we have that $3\gamma\lambda<M^-f_1(x)<M^-f(x)$. If $x\in I^+_j$ then
\begin{align*}
3\gamma\lambda&=\frac{1}{\left|I_j\right|}\int_{I_j} f_1(t)\,dt=\frac{x-a_j}{(x-a_j)|I_j|}\int_{a_j}^{x}f(t)\,dt\leq 3M^-f(x).
\end{align*}
By the exponential  Carleson's  estimation,  (see \cite{C}), we have
\begin{align*}
\left|\left\{x\in (x_{i+1},\xi): \Delta(x)>\frac{c}{\gamma}\right\}\right|<Ce^{\frac{-c}{\gamma}}\left|(x_{i+1},\xi)\right|\leq2 Ce^{\frac{-c}{\gamma}}(x_{i+1}-x_{i+2}),
\end{align*}
therefore
\begin{align*}
\left|\left\{x\in (x_{i+1},\xi): T^*h(x)>\frac{1}{4}\lambda, M^-f(x)<\gamma\lambda\right\}\right|\leq Ce^{\frac{-c}{\gamma}}(x_{i+1}-x_{i+2}).
\end{align*}
This,  together  with  our  estimates  for  $f_2$  and  $g$,  is  easily  seen  to  imply  the desired  result.
\end{proof}

\begin{lemma}\label{none}
Let $p\geq1$,  $w\in A_p^-$ and let $T^-$ be a one-side singular integral. Then there exist a constant $C$ which depends on $p$ and $T^-$, such that
\begin{align*}
||T^-f||_{L^{1}(w)} &\leq c||w||_{A_p^-}||M^-f||_{L^{1}(w)}.
\end{align*}
\end{lemma}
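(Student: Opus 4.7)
The plan is to derive this Coifman-type estimate from the good-$\lambda$ inequality established in Lemma \ref{buenoslambda} via the classical distribution-function argument, controlling $T^-f$ by the truncated maximal operator $T^*$ and then integrating.

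First I would work with the maximal operator $T^*$ rather than $T^-$ directly, since $|T^-f(x)| \le T^*f(x)$ pointwise, so it suffices to prove
\begin{equation*}
\|T^*f\|_{L^1(w)} \leq C\,\|w\|_{A_p^-}\|M^-f\|_{L^1(w)}.
\end{equation*}
To avoid worrying about whether the left-hand side is finite, I would first assume $f$ is bounded with compact support (or truncate $T^*$ appropriately) so that $T^*f \in L^1(w)$ can be verified a priori using Remark \ref{observacion1}, and then remove the truncation at the end by monotone convergence, since $\|T^*f\|_{L^1(w)}$ depends only on the distribution function.

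Next, fix $\epsilon = 1/4$ in Lemma \ref{buenoslambda}. This produces a constant $c' = c'(\epsilon, \gamma_0, p)$ such that, setting $\gamma := c'/\|w\|_{A_p^-}$ (which can be assumed $< \gamma_0$ by enlarging the constant if necessary), one has for every $\lambda > 0$
\begin{equation*}
w\bigl(\{T^*f > 2\lambda,\ M^-f \le \gamma\lambda\}\bigr) \le \tfrac{1}{4}\,w(\{T^*f > \lambda\}).
\end{equation*}
Since $\{T^*f > 2\lambda\}$ is contained in the union of this set with $\{M^-f > \gamma\lambda\}$, I would multiply by the measure, change variables $\lambda \mapsto \lambda/2$, and integrate in $\lambda$ to obtain
\begin{equation*}
\int_0^\infty w(\{T^*f > \lambda\})\,d\lambda \le \tfrac{1}{2}\int_0^\infty w(\{T^*f > \lambda\})\,d\lambda + \frac{2}{\gamma}\int_0^\infty w(\{M^-f > \lambda\})\,d\lambda,
\end{equation*}
i.e.\ $\|T^*f\|_{L^1(w)} \le \tfrac{1}{2}\|T^*f\|_{L^1(w)} + (2/\gamma)\|M^-f\|_{L^1(w)}$. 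Absorbing the first term on the right (legitimate thanks to the a priori finiteness from the truncation), this yields
\begin{equation*}
\|T^*f\|_{L^1(w)} \le \frac{4}{\gamma}\|M^-f\|_{L^1(w)} = \frac{4}{c'}\|w\|_{A_p^-}\|M^-f\|_{L^1(w)},
\end{equation*}
which is the claim after passing to the limit in the truncation.

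The main obstacle is not the distribution-function juggling but rather the justification that $\|T^*f\|_{L^1(w)}$ is finite so that we may absorb the $\tfrac12\|T^*f\|_{L^1(w)}$ term; this is the usual delicate step in Coifman-type arguments. I would handle it by first proving the inequality for $f$ bounded with compact support and for the truncated maximal operator $T^*_N$ (cutting off the integrand away from the singularity and at infinity), using Remark \ref{observacion1} to guarantee $T^*_N f \in L^1(w)$, then applying Fatou/monotone convergence as $N \to \infty$ and a standard density argument to recover the general $f$. The dependence of $C$ on $p$ and $T^-$ enters through the constants $c_1, c_2, \gamma_0$ of Lemma \ref{buenoslambda} and through $c'$.
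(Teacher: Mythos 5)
Your proposal is correct and follows essentially the same route as the paper: set $\epsilon=\tfrac14$ in Lemma~\ref{buenoslambda}, split the level set of $T^*f$ into the good-$\lambda$ piece and the maximal-function piece, integrate the distribution function, and absorb the factor $\tfrac12$. The only minor difference is how the absorption is justified: you truncate the operator $T^*$ and invoke Remark~\ref{observacion1} for a priori finiteness, whereas the paper truncates the $\lambda$-integral at a finite level $N$ and lets $N\to\infty$ at the end; both are standard variants of the same device.
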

\begin{proof}
By \textbf{Lemma \ref{buenoslambda}}, for $\epsilon=\frac{1}{4}$ exists $c'$ such that
\begin{align*}
w\left(\left\{x\in\re: T^*f(x)>2\lambda, M^-f(x)<\frac{c'\lambda}{||w||_{A_p^-}}\right\}\right)<\frac14 w(\{T^*f(x)>\lambda\}).
\end{align*}

Observe that
\begin{align*}
&\int_0^N w(\{T^-f>\lambda\})\,d\lambda \leq2\int_0^{\frac{N}{2}} w(\{T^*f>2\lambda\})\,d\lambda\\
&\leq 2\int_0^{\frac{N}{2}} w\left(\left\{T^*f>2\lambda, M^-f<\frac{c'\lambda}{||w||_{A_p^-}} \right\}\right)d\lambda + 2\int_0^{\frac{N}{2}} w\left(\left\{M^-f\geq\frac{c'\lambda}{||w||_{A_p^-}} \right\}\right)d\lambda\\
&=B_1+B_2.
\end{align*}

For $B_1$, we obtain
\begin{align*}
B_1=2\int_0^{\frac{N}{2}} w\left(\left\{T^*f>2\lambda, M^-f<\frac{c'\lambda}{||w||_{A_p^-}} \right\}\right)\,d\lambda \leq\frac{1}{2}\int_0^{N} w(\{T^*f>\lambda \})\,d\lambda.
\end{align*}
It is easy to see that
\begin{align*}
\frac{1}{2}\int_0^{N} w(\{T^*f>\lambda \})\,d\lambda\leq \frac{2||w||_{A_p^-}}{c'}\int_0^{\frac{Nc'}{2||w||_{A_p^-}}} w(M^-f\geq \lambda \})\,d\lambda,
\end{align*}
then
\begin{align*}
||T^-f||_{L^1(w)}\leq \frac{4||w||_{A_p^-}}{c'} ||M^-f||_{L^1(w)},
\end{align*}
obtaining the desired result.
\end{proof}

%then  exist $c$ which depends of $p$ such that
\begin{lemma}\label{coif}
Let $T^-$ be a one-side singular integral, $p,r \geq1$. Then  there exist $C$  depending only  on $T^-$ such that
 \begin{equation}
\left|\left|\frac{T^-f}{M^-_rw}\right|\right|_{L^{p'}(M^-_rw)} \leq C p'\left|\left|\frac{M^-f}{M^-_rw}\right|\right|_{L^{p'}(M^-_r w)}.
 \end{equation}
\end{lemma}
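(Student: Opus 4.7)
My plan is to apply the Coifman--Fefferman scheme, using the weighted good-$\lambda$ inequality from Lemma \ref{buenoslambda} applied to the weight $\sigma := (M_r^- w)^{1-p'}$, and then integrating through the layer cake formula.

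The first step is to verify that $\sigma$ lies in a suitable Sawyer class. Writing $M_r^- w = (M^-(w^r))^{1/r}$ and using the one-sided analog of Coifman--Rochberg (an extension of Remark \ref{observacion4} with $\delta = 1/r$, the boundary case $r=1$ being handled by taking $\delta$ close to $1$), we see that $M_r^- w \in A_1^+$ with norm bounded uniformly in $w$. By the standard duality $v \in A_p^+ \Longleftrightarrow v^{1-p'} \in A_{p'}^-$, it follows that $\sigma \in A_{p'}^-$ with a controllable constant.

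Next I apply the weighted part of Lemma \ref{buenoslambda} with $\epsilon = 2^{-p'-1}$, obtaining a parameter $\gamma = c'/\|\sigma\|_{A_{p'}^-}$ (with $c' = c'(\epsilon,p')$) such that $\sigma(\{T^* f > 2\lambda,\, M^- f < \gamma\lambda\}) \leq 2^{-p'-1}\sigma(\{T^* f > \lambda\})$. Using $|T^- f| \leq T^* f$ and the layer cake formula
\[
\int (T^* f)^{p'} \sigma\, dx = 2^{p'} p' \int_0^\infty \lambda^{p'-1} \sigma(\{T^* f > 2\lambda\})\, d\lambda,
\]
one splits the level set as $\sigma(\{T^* f > 2\lambda\}) \leq \sigma(\{T^* f > 2\lambda,\, M^- f < \gamma\lambda\}) + \sigma(\{M^- f \geq \gamma\lambda\})$, applies the good-$\lambda$ bound to the first term, and (using that $2^{p'}\cdot 2^{-p'-1} = 1/2$) absorbs the resulting multiple of $\int (T^* f)^{p'}\sigma$ into the left. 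This yields $\int (T^* f)^{p'}\sigma \leq 2^{p'+1}\gamma^{-p'} \int (M^- f)^{p'}\sigma$, and taking $p'$-th roots produces the estimate with constant $C\gamma^{-1}$.

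The main obstacle is to show that $\gamma^{-1} = \|\sigma\|_{A_{p'}^-}/c' \leq C p'$ with $C$ depending only on $T^-$, uniformly in $r$, $p$, and $w$. This requires careful tracking of how $c'$ scales with $\epsilon$ and $p'$ through Lemma \ref{buenoslambda} and Lemma \ref{epsilon} (specifically, optimizing the parameter in the exponential bound $c_1 e^{-c_2/\gamma}$ against the constraint coming from Lemma \ref{epsilon}), together with controlling $\|\sigma\|_{A_{p'}^-} \leq \|M_r^- w\|_{A_p^+}^{p'-1}$. Extracting the correct \emph{linear} $p'$ dependence, and the uniformity in $r$ (particularly as $r \to 1^+$, where the $A_1^+$ bound on $M_r^- w$ degrades), is the core technical subtlety of the argument.
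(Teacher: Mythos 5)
Your proposal takes a genuinely different route from the paper: you attempt a direct Coifman--Fefferman / layer-cake argument, feeding the weight $\sigma=(M_r^-w)^{1-p'}$ into the weighted good-$\lambda$ inequality of Lemma \ref{buenoslambda}. The paper instead dualizes, applies the Rubio de Francia algorithm from \cite{ASC} with $v=M_r^-w$ to produce an auxiliary function $R(h)$, shows $\|R(h)\|_{A_3^-}\le Cp'$ via the factorization $R(h)=\bigl[R(h)(M_r^-w)^{1/p}\bigr]\bigl[(M_r^-w)^{1/(2p)}\bigr]^{-2}$, and then invokes the $L^1$ Coifman estimate of Lemma \ref{none}. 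The structural point of the paper's route is that $M_r^-w$ only ever appears raised to the power $1/(2p)\le 1/2$, whose $A_1^+$ norm is bounded by an absolute constant for all $p,r\ge1$, so the dependence on $p'$ enters only through the $A_1^-$ bound $cp'$ on the Rubio de Francia output, and the final constant is honestly linear in $p'$.

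There is a genuine gap in your route precisely at the step you flag as "the core technical subtlety." Your argument requires $\|\sigma\|_{A_{p'}^-}\le Cp'$ (in fact bounded), but $\|\sigma\|_{A_{p'}^-}=\|M_r^-w\|_{A_p^+}^{\,p'-1}\le\|M_r^-w\|_{A_1^+}^{\,p'-1}$, and the Coifman--Rochberg bound gives $\|M_r^-w\|_{A_1^+}\le Cr'$ for an absolute constant $C$ that is strictly greater than $1$. Thus $\|\sigma\|_{A_{p'}^-}$ grows exponentially in $p'$, not linearly, and tracing through the good-$\lambda$ integration (where $\gamma^{-1}\sim p'\,\|\sigma\|_{A_{p'}^-}$ after choosing $\epsilon=2^{-p'-1}$) produces a constant that blows up exponentially. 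No amount of optimizing $c_1,c_2,c'$ fixes this, because the exponential loss is structural: you commit to raising $M_r^-w$ to the power $p'-1$, which is exactly what the paper's $A_3^-$ factorization avoids. A secondary issue is your treatment of $r=1$: $M_1^-w=M^-w$ is not in $A_1^+$ in general (the case $\delta=1$ is excluded from Coifman--Rochberg), and "taking $\delta$ close to $1$" does not recover it; here too the paper's device of only using the power $1/(2p)<1$ sidesteps the difficulty cleanly.
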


It is known that the weigth $(M^-_r w)^{1-p'}$belongs to the $A_{\infty}^-$ class with the corresponding constants independent of $w$. Hence \textbf{Lemma \ref{coif}} is a particular case of the Coifman-type estimate.

\begin{proof}

By duality we have
\begin{align*}
\left|\left|\frac{T^-f}{M^-_rw}\right|\right|_{L^{p'}(M^-_rw)}=\sup_{||h||_{L^{p}(M^-_rw)}=1}\int_\re \left|T^-f\right| h\,dx.
\end{align*}

By \textbf{Lemma 2.3} in \cite{ASC} \emph{choosing $s=p$ and $v=M^-_rw$ there exists an operator $R$ such that
\begin{itemize}
\item $h\leq R(h)$
\item $||R(h)||_{L^{p}(M^-_rw)} \leq 2 ||h||_{L^{p}(M^-_rw)}$
\item $R(h)(M^-_rw)^{\frac{1}{p}}\in A_1$ \qquad\mbox{hence}\qquad$R(h)(M^-_rw)^{\frac{1}{p}}\in A_1^-$\\ \mbox{with}\qquad $||R(h)(M^-_rw)^{\frac{1}{p}}||_{A_1^-}\leq cp'$.
\end{itemize}}
Using the \textbf{Remarks (\ref{observacion3})} and \textbf{(\ref{observacion4})} we have
\begin{align*}
||R(h)||_{A_3^-}&=||R(h)(M^-_rw)^{\frac{1}{p}}[(M^-_rw)^{\frac{1}{2p}}]^{-2}||_{A_3^+}\\
&\leq ||R(h)(M^-_rw)^{\frac{1}{p}}||_{A_1^-} ||(M^-_rw)^{\frac{1}{2p}}||_{A_1^+}^2\\
&\leq c p' (\frac{c}{1-\frac{1}{2pr}})^2\leq Cp'.
\end{align*}

Finally by \textbf{Lemma \ref{none}}, %with $R(h)\in A_3^-$
\begin{align*}
\int_\re \left|T^-f\right| h\,dx&\leq \int_\re \left|T^-f\right| R(h)\,dx \leq C ||R(h)||_{A_3^-} \int_\re  M^-(f) R(h)\,dx\\
&\leq Cp' \int_\re  \frac{M^-f}{M_r^-w} R(h) M_r^-w\,dx \leq Cp' \left|\left|\frac{M^-f}{M_r^-w}\right|\right|_{L^{p'}(M_r^-w)}||R(h)||_{L^{p}(M^-_rw)}.
\end{align*}
As $||h||_{L^{p}(M^-_rw)}=1$ we have

\begin{align*}
\left|\left|\frac{T^-f}{M^-_rw}\right|\right|_{L^{p'}(M^-_rw)} \leq Cp' \left|\left|\frac{M^-f}{M_r^-w}\right|\right|_{L^{p'}(M_r^-w)}.
\end{align*}

\end{proof}

\section{Proof of the results}\label{section:main}
\subsection{Proof of the Theorems} In order to proof Theorem \ref{fuerte} we first need to prove the following result:

\begin{theor}\label{central}
Let $1 < p < \infty$, $1<r<2$ , $w\in A^+_1$ and $T^+$ be a one-side singular integral. Then
 \begin{equation}
||T^+f||_{L^p(w)} \leq C pp'(r')^{\frac{1}{p'}}||f||_{L_p(M^-_r w)}\ \
 \end{equation}
 where $C$ only depends on $T^+$.
\end{theor}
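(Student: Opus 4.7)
\medskip

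\noindent\textbf{Plan.} My plan is to prove Theorem \ref{central} by combining $L^p(w)$--$L^{p'}(w^{1-p'})$ duality, the adjointness relation between $T^+$ and $T^-$, H\"older's inequality with the auxiliary weight $M_r^-w$, and the Coifman-type estimate of Lemma \ref{coif}.

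First, by duality,
\[
\|T^+f\|_{L^p(w)} = \sup_\phi \int T^+f\cdot \phi\,dx,
\]
where the supremum is taken over nonnegative $\phi$ with $\|\phi\|_{L^{p'}(w^{1-p'})}\leq 1$. Since the kernel of $T^+$ is supported in $(-\infty,0)$, its formal adjoint is $T^-$, so this equals $\sup_\phi \int f\cdot T^-\phi\,dx$. Inserting the factor $M_r^-w$ and applying H\"older yields
\[
\int f\cdot T^-\phi\,dx \;\leq\; \|f\|_{L^p(M_r^-w)}\cdot \Big\|\tfrac{T^-\phi}{M_r^-w}\Big\|_{L^{p'}(M_r^-w)}.
\]
Then Lemma \ref{coif} (applied to $\phi$ in place of $f$, which contributes a factor $Cp'$) reduces the theorem to the weighted maximal estimate
\begin{equation*}
\Big\|\tfrac{M^-\phi}{M_r^-w}\Big\|_{L^{p'}(M_r^-w)} \;\leq\; C\,p\,(r')^{1/p'}\,\|\phi\|_{L^{p'}(w^{1-p'})},
\end{equation*}
or equivalently
\[
\int (M^-\phi)^{p'}(M_r^-w)^{1-p'}\,dx \;\leq\; C\,p^{p'}\,r'\int \phi^{p'}w^{1-p'}\,dx.
\]

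To establish this last inequality I would apply a one-sided Fefferman--Stein-type bound $\int (M^-\phi)^{p'}u\,dx \leq C(p)^{p'}\int \phi^{p'}M^+u\,dx$ with $u=(M_r^-w)^{1-p'}$, and then prove the pointwise control
\[
M^+\bigl((M_r^-w)^{1-p'}\bigr)(x) \;\leq\; C\,r'\,w(x)^{1-p'}.
\]
The idea behind this pointwise bound is to use the identity $M_r^-w=(M^-(w^r))^{1/r}$: for $y\in(x,x+h)$ one has $M^-(w^r)(y)\geq \frac{1}{y-x}\int_x^y w^r$, which by Jensen is at least $\bigl(\frac{1}{y-x}\int_x^y w\bigr)^r$; combining with the fact that $w\in A_1^+$ prevents $w$ from decreasing rapidly to the right of $x$, this lower bound on $M_r^-w(y)$ translates into the upper bound on the average of $(M_r^-w)^{1-p'}$ over $(x,x+h)$, the factor $r'$ capturing the gain from the reverse-H\"older-like extra power $r$.

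The main obstacle is precisely this pointwise estimate on $M^+\bigl((M_r^-w)^{1-p'}\bigr)$: care is needed because $1-p'<0$, so raising $M_r^-w$ to that power inverts inequalities and one must exploit the one-sided geometry of $A_1^+$ weights carefully (so that the potential blow-up of $(M_r^-w)^{1-p'}$ far from $x$ is controlled). Once this pointwise bound is secured, chaining duality, adjointness, H\"older, Lemma \ref{coif}, and the Fefferman--Stein step gives the stated inequality $\|T^+f\|_{L^p(w)}\leq Cpp'(r')^{1/p'}\|f\|_{L^p(M_r^-w)}$.
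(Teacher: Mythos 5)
Your first three steps (duality to $T^-$, the identification of the dual estimate in terms of $\|T^-\phi/M_r^-w\|_{L^{p'}(M_r^-w)}$, and the invocation of Lemma \ref{coif} to replace $T^-$ by $M^-$ at the cost of a factor $p'$) match the paper exactly. The divergence, and the gap, is in how you then bound $\|M^-\phi/M_r^-w\|_{L^{p'}(M_r^-w)}$. You propose a one-sided Fefferman--Stein step together with the pointwise claim $M^+\bigl((M_r^-w)^{1-p'}\bigr)\leq Cr'\,w^{1-p'}$, which you yourself flag as the main obstacle and do not prove. That claim is in fact false for general $w\in A_1^+$ and an \emph{arbitrary} $r\in(1,2)$: your heuristic that ``$w\in A_1^+$ prevents $w$ from decreasing rapidly to the right of $x$'' is backwards, since $M^-w\leq Cw$ only controls how much $w$ can \emph{increase} relative to averages to its left, and imposes no lower bound on $w$ to the right. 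Concretely, take $w=N\chi_{(-1,0)}+\epsilon\chi_{[0,\infty)}+\chi_{(-\infty,-1]}$ with $N/\epsilon=\|w\|_{A_1^+}$ large and fix $r$ close to $2$ (so $r'$ is bounded). At $x=-1/2$ one has $w(x)^{1-p'}=N^{1-p'}$, while for large $y>0$ one has $M_r^-w(y)\approx\epsilon$, so $(M_r^-w)^{1-p'}\approx\epsilon^{1-p'}$ on a long interval to the right of $x$; hence $M^+\bigl((M_r^-w)^{1-p'}\bigr)(x)\gtrsim\epsilon^{1-p'}=(N/\epsilon)^{p'-1}w(x)^{1-p'}$, and the factor $(N/\epsilon)^{p'-1}=\|w\|_{A_1^+}^{p'-1}$ is unbounded. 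Your pointwise inequality could only hold if $r$ is tied to $\|w\|_{A_1^+}$ (as in the reverse H\"older exponent $r_w$), whereas Theorem \ref{central} is stated for all $1<r<2$.

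The paper sidesteps this entirely with a different pointwise device that requires no information on $w$ at all: H\"older's inequality with exponents $pr$ and $(pr)'$ on an interval $(a,b)$ gives
\begin{equation*}
M^-\phi(b)\ \leq\ \bigl(M_r^-w(b)\bigr)^{1/p}\,M^-_{(pr)'}\bigl(\phi\,w^{-1/p}\bigr)(b),
\end{equation*}
and raising to the power $p'$ produces exactly the factor $(M_r^-w)^{p'-1}$ that cancels the weight $(M_r^-w)^{1-p'}$ in the integrand, reducing the whole thing to the \emph{unweighted} bound $\|M^-_{(pr)'}g\|_{L^{p'}}\lesssim p\,(r')^{1/p'}\|g\|_{L^{p'}}$ with $g=\phi w^{-1/p}$. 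No Fefferman--Stein step, no pointwise bound on $M^+$ of a negative power of $M_r^-w$, and no hidden dependence between $r$ and $\|w\|_{A_1^+}$. If you want to salvage a Fefferman--Stein-style argument, you would have to replace your pointwise claim by something that is actually true for all $r\in(1,2)$; the H\"older cancellation is the cleanest way to do that, and it is what the paper uses.
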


\begin{proof}
Observe that $(T^{+})^*=T^-$ is the adjoint operator of $T^+$, with kernel supported in $(0,\infty)$. Also observe that as $(M^{-}_rw)\in A_{1}^+ \subset  A_{p}^+$, then $(M^{-}_rw)^{1-p'}\in A_{p'}^-\subset A_{\infty}^- $. Therefore is equivalent to prove
\begin{align*}
\left|\left|\frac{T^{-}f}{M_r^-w}\right|\right|_{L^{p'}(M_r^-w)} \leq C pp'(r')^{\frac{1}{p'}}\left|\left|\frac{f}{w}\right|\right|_{L^{p'}(w)}.
 \end{align*}
By H\"older's inequality
 \begin{align*}
\frac{1}{b-a}\int_a^b f w^{-\frac{1}{p}}w^{\frac{1}{p}}\leq\left(\frac{1}{b-a}\int_a^b w^{r}\right)^{\frac{1}{pr}}\left(\frac{1}{b-a}\int_a^b \left(f w^{-\frac{1}{p}}\right)^{(pr)'}\right)^{\frac{1}{(pr)'}},
 \end{align*}
and taking supremum we get
\begin{align*}
(M^-f(b))^{p'}\leq(M_r^- w(b))^{p'-1}(M^-_{(pr)'}(fw^{-\frac{1}{p}})(b))^{p'},
\end{align*}
then
\begin{align*}
\left|\left|\frac{M^-f}{M_r^-w}\right|\right|_{L^{p'}(M_r^-w)}\leq \left|\left|M^-_{(pr)'}\left(fw^{-\frac{1}{p}}\right) \right|\right|_{L^{p'}}.
\end{align*}
Now using that $|| M^-_k g ||_{L^s}\leq C \left(\frac{s}{k}\right)'^{\frac{1}{k}}||g||_{L^s}$, for $g=fw^{\frac{1}{p}}$, $k=(pr)'$ and $s=p'$ we get
\begin{align*}
\left|\left|\frac{M^-f}{M_r^-w}\right|\right|_{L^{p'}(M_r^-w)}\leq C\left(\frac{rp-1}{r-1}\right)^{1-\frac{1}{pr}}\left|\left|\frac{f}{w}\right|\right|_{L^{p'}(w)}\leq C p\left(\frac{1}{r-1}\right)^{1-\frac{1}{pr}}\left|\left|\frac{f}{w}\right|\right|_{L^{p'}(w)}.
\end{align*}
Observe that $t^{\frac{1}{t}}\leq2$ for $t\geq1$, then
\begin{align*}
\left(\frac{1}{r-1}\right)^{1-\frac{1}{pr}}\leq (r')^{1-\frac{1}{p+1}+\frac{1}{pr'}}\leq 2 (r')^{\frac{1}{p'}}.
\end{align*}
Finally applying \textbf{Lemma \ref{coif}} we get,
\begin{align*}
\left|\left|\frac{T^{-}f}{M_r^-w}\right|\right|_{L^{p'}(M_r^-w)} \leq Cp'\left|\left|\frac{M^-f}{M_r^-w}\right|\right|_{L^{p'}(M_r^-w)}\leq C pp'(r')^{\frac{1}{p'}}\left|\left|\frac{f}{w}\right|\right|_{L^{p'}(w)}.
\end{align*}
\end{proof}

\begin{proof}[Proof of Theorem \ref{fuerte}] This result  is a consequence of \textbf{Theorem \ref{central}}. Using \textbf{Lemma \ref{RHI}}, we observe that $r_w'\lesssim||w||_{A_1^+}$ and $M^-_{r_w}(w\chi_{(a,x)})(x) \leq 2 M^-(w\chi_{(a,x)})(x)\leq 2 ||w||_{A_1^+} w(x)$ $a.e.\ x$. Then
 \begin{align*}
||T^+f||_{L^p(w)} &\leq\left(\sum_{k\in\mathbb{Z}}\int_k^{k+1}|T^+f|^p(x)w\chi_{(k,k+1)}(x)\,dx\right)^{\frac{1}{p}}\\
&\leq C pp'(r_w')^{\frac{1}{p'}}\left( \sum_{k\in\mathbb{Z}}\int_k^{k+1}|f|^p(x)M^-_{r_w }(w\chi_{(k,k+1)})(x)\,dx\right)^{\frac{1}{p}} \\
&= C pp'(r_w')^{\frac{1}{p'}} \left(\sum_{k\in\mathbb{Z}}\int_k^{k+1}|f|^p(x)M^-_{r_w }(w\chi_{(k,x)})(x)\,dx\right)^{\frac{1}{p}} \\
&\leq C pp'(r_w')^{\frac{1}{p'}} \left(2\sum_{k\in\mathbb{Z}}\int_k^{k+1}|f|^p(x)M^-(w\chi_{(k,x)})(x)\,dx\right)^{\frac{1}{p}} \\
&\leq C pp'(||w||_{A_1^+})^{\frac{1}{p'}} ||w||_{A_1^+}^{\frac{1}{p}} ||f||_{L^p( w)}\\
&\leq C pp' ||w||_{A_1^+} ||f||_{L^p( w)}.
\end{align*}
\end{proof}

\begin{proof}[Proof of Theorem \ref{debil}]
%We going to follow the ideas in the \textbf{Theorem 1.2} of \cite{ASC2} and the \textbf{Theorem 3.12} of \cite{MCS}.\\
Without loss of generality we assume that $0\le f\in L^\infty_c(\re)$.
 Let
\[
\Omega=\{x\in\re :M^+f(x)>\lambda\}
=\bigcup_jI_j=\bigcup_j(a_j,b_j),
\]
where $I_j=(a_j,b_j)$ are the connected component of $\Omega$
and they satisfy
\[
\frac 1{|I_j|}\int_{I_j}f(y)\,dy=\lambda.
\]
Note that if $x\notin\Omega$, then for all $h \ge 0$
\[
\frac 1{h}\int_{x}^{x+h}f(y)\,dy \leq\lambda.
\]
Therefore $f(x)\leq\lambda$ for  a.e $x\in \re\setminus\Omega$.
Let $ I_j^-=(c_j,a_j)$ with $c_j$ chosen so that $| I_j^-|=2\,|I_j|$ and set
$$
\tilde{\Omega}
=
\bigcup_j( I_j^-\cup I_j)
=
\bigcup_j\tilde {I_j}.
$$
We write  $f=g+h$ where
$$
g=f\bigchi_{\R\setminus
\Omega}+\sum_{j=1}^{\infty}\lambda\bigchi_{I_j},
\qquad
h=\sum_{j=1}^{\infty}h_j=\sum_{j=1}^{\infty}(f-\lambda)\bigchi_{I_j}.
$$
Observe that $0\le g(x)\leq \lambda$ for a.e. $x$ and also that $h_j$ has vanishing integral.
Then
\begin{align*}
w(\{x: \left|T^+f(x)\right|>\lambda\})&\leq w(\widetilde{\Omega})+w\left(\left\{x\in\re \setminus\widetilde{\Omega}:\left| T^+h(x)\right|>\frac{\lambda}{2}\right\}\right)\\
&+w\left(\left\{x\in\re \setminus\widetilde{\Omega}: \left|T^+g(x)\right|>\frac{\lambda}{2}\right\}\right)=I+II+III.
\end{align*}

 We estimate $I$:
\begin{align*}
 I=w(\widetilde{\Omega})&\leq\sum_j(w(I^-_j)+ w(I_j)),
\end{align*}
for each $j$
\begin{align*}
w(I^-_j)&=\frac{w(I^-_j)}{\left|I_j\right|}\left|I_j\right|=\frac{w(I^-_j)}{\left|I_j\right|}\frac{1}{\lambda}\int_{I_j}f(x)\,dx\\
&=\frac{1}{\lambda}\int_{I_j}\frac{1}{\left|I_j\right|}\int_{I^-_j}w(t)\,dt \, f(x)\,dx\leq\frac{3}{\lambda}\int_{I_j}\frac{1}{(x-c_{j})}\int_{c_j}^{x}w(t)\,dt \,f(x)\,dx\\
&\leq\frac{3}{\lambda}\int_{I_j}f(x)M^-w(x)\,dx.
\end{align*}
On the other hand,  $(w,M^-w)\in A_1^+$ then $M^+$ is weak type $(1,1)$ with respect to this pair of weights,  then
\begin{align*}
\sum_jw(I_j)=w(\{x: M^+f(x)>\lambda\})<\frac{4}{\lambda}\int_{\re}f(t)M^-w(t)\,dt,
\end{align*}
therefore
\begin{align*}
 I=w(\widetilde{\Omega})\leq\frac{7}{\lambda}\int_{\re}f(t)M^-w(t)\,dt\leq\frac{7}{\lambda}||w||_{A_1^+}\int_{\re}f(t)w(t)\,dt.
\end{align*}

To estimate $II$, let $r_j=|I_j|=| I_j^-|/2$. Now we use that  $h_j$ is supported in $I_j$,  $\int_{I_j}h_j=0$, and  that $K$ is supported in $(-\infty,0)$:
\begin{align*}
 II&=w\left(\left\{x\in\re\setminus\widetilde{\Omega}:\left| T^+h(x)\right|>\frac{\lambda}{2}\right\}\right)\leq\frac{2}{\lambda}\int_{\re\setminus\widetilde{\Omega}}\left|T^+h(t)\right|w(t)\,dt\\
  &\leq\frac{2}{\lambda}\sum_j\int_{I_j}\left|h_j(y)\right|\int_{\re\setminus\widetilde{I}_j}\left|K(t-y)-K(t-a_j)\right|w(t)\,dt\,dy\\
 &=\frac{2}{\lambda}\sum_j\int_{I_j}\left|h_j(y)\right|\int_{-\infty}^{c_j}\left|K(t-y)-K(t-a_j)\right|w(t)\,dt\,dy.
\end{align*}
Observe that it is suffices to obtain that  for all $y\in I_j$,
\begin{align*}
 \int_{-\infty}^{c_j}\left|K(t-y)-K(t-a_j)\right|w(t)\,dt\leq C \essinf_{I_j} M^-(w\chi_{\re\setminus\widetilde{I}_j}).
\end{align*}
To see this we use the condition of the kernel $K$,
\begin{align*}
\int_{-\infty}^{c_j}\left|K(t-y)-K(t-a_j)\right|&w(t)\,dt=\sum_{k=1}^{\infty}\int_{a_j-2^{k+1}r_j}^{a_j-2^{k}r_j}\left|K(t-y)-K(t-a_j)\right|w(t)\,dt\\
&\leq K_{T^+} \sum_{k=1}^{\infty}\int_{a_j-2^{k+1}r_j}^{a_j-2^{k}r_j}\left|\frac{y-a_j}{(t-a_j)^2}\right|w(t)\,dt\\
&\leq K_{T^+}
\sum_{k=1}^{\infty}\frac{y-a_j}{(2^kr_j)^2}\int_{a_j-2^{k+1}r_j}^{a_j-2^{k}r_j}w(t)\chi_{(a_j-2^{k}r_j,a_j-2^{k+1}r_j)}\,dt\\
&\leq K_{T^+} \sum_{k=1}^{\infty}\frac{1}{2^k}\frac{1}{(2^kr_j)}\int_{a_j-2^{k+1}r_j}^{a_j-2^{k}r_j}w(t)\chi_{(a_j-2^{k}r_j,a_j-2^{k+1}r_j)}\,dt,
\end{align*}
where $K_{T^+}$ depends only on of $T^+$. If $x\in I_j$
\begin{align*}
&\int_{-\infty}^{c_j}\left|K(t-y)-K(t-a_j)\right|w(t)\,dt\\
&\leq K_{T^+}\sum_{k=1}^{\infty}\frac{1}{2^k}\frac{x-a_j+2^{k+1}r_j}{2^kr_j}\frac{1}{x-a_j+2^{k+1}r_j}\int_{a_j-2^{k+1}r_j}^{x}w(t)\chi_{(a_j-2^{k}r_j,a_j-2^{k+1}r_j)}\,dt\\
&\leq K_{T^+} M^-w\chi_{\re\setminus\widetilde{I}_j}(x) \sum_{k=1}^{\infty}\frac{1}{2^k}\left(\frac{x-a_j}{2^kr_j}+\frac{2^{k+1}r_j}{2^kr_j}\right)\leq C M^-(w\chi_{\re\setminus\widetilde{I}_j})(x),
\end{align*}
therefore
\begin{align*}
II&\leq \frac{C}{\lambda}\sum_j  \essinf _{I_j} M^-(w\chi_{\re\setminus\widetilde{I}_j})\int_{I_j}\left|h_j(y)\right|\,dy\\
&\leq \frac{C}{\lambda}\sum_j   \int_{I_j}\left|h_j(y)\right|M^-(w\chi_{\re\setminus\widetilde{I}_j})(y)\,dy\\
&\leq \frac{C}{\lambda}\left[\sum_j   \int_{I_j}f(y)M^-(w\chi_{\re\setminus\widetilde{I}_j})(y)\,dy+\sum_j   \int_{I_j}\left|g(y)\right|M^-(w\chi_{\re\setminus\widetilde{I}_j})(y)\,dy\right]\\
&=\frac{C}{\lambda}(A+B).
\end{align*}

For $ A $ there is nothing to prove.
To work with $ B $ we need to prove the following inequality
\begin{equation}\label{desMrI}
  M^-(w\chi_{\re\setminus\widetilde{I}_j})(y) \leq \frac{3}{2}\essinf_{z\in I_j}\ M^-(w\chi_{\re\setminus\widetilde{I}_j})(z),
\end{equation}
for all $y\in I_j$. In fact for $y, z \in I_j$,
\begin{align*}
M^-(w\chi_{\re\setminus\widetilde{I}_j})(y)&=  \sup_{t<y} \frac{1}{y-t} \int_{t}^y w(s)\chi_{\re\setminus\widetilde{I}_j}(s)\,ds= \sup_{t<c_j} \frac{1}{y-t} \int_{t}^{c_j}  w(s)\chi_{\re\setminus\widetilde{I}_j}(s)\,ds\\
&\leq \sup_{t<c_j} \frac{3}{2}\frac{1}{z-t} \int_{t}^{c_j}  w(s)\chi_{\re\setminus\widetilde{I}_j}(s)\,ds\leq\frac{3}{2}M^-(w\chi_{\re\setminus\widetilde{I}_j})(z).
\end{align*}
Then
\begin{align*}
B&=\sum_j   \int_{I_j}\left|g(y)\right|M^-(w\chi_{\re\setminus\widetilde{I}_j})(y)\,dy=\sum_j   \int_{I_j}\lambda M^-(w\chi_{\re\setminus\widetilde{I}_j})(y)\,dy\\
&\leq \sum_j    \int_{I_j} f(t)\,dt \frac{1}{|I_j|} \int_{I_j}  M^-(w\chi_{\re\setminus\widetilde{I}_j})(y)\,dy\\
&\leq \frac{3}{2}\ \sum_j    \int_{I_j} f(t)\,dt\ \essinf_{I_j} M^-(w\chi_{\re\setminus\widetilde{I}_j})\leq \frac{3}{2}\ \sum_j   \int_{I_j}f(t)M^-(w\chi_{\re\setminus\widetilde{I}_j})(t)\,dt.
\end{align*}

So
\begin{align*}
II\leq \frac{C}{\lambda} \sum_j   \int_{I_j}f(t)M^-(w\chi_{\re\setminus\widetilde{I}_j})(t)\,dt\leq \frac{C}{\lambda}||w||_{A_1^+} \int_{\re}f(t)w(t)\,dt.
\end{align*}

Finally we estimate $III$. First observe that doing the same proof that in (\ref{desMrI}) we obtain
\begin{equation}\label{desMrOmega}
  M^-(w\chi_{\mathbb{R}\setminus\widetilde{\Omega}})(y) \leq \frac{3}{2}\essinf_{z\in I_j}\ M^-(w\chi_{\mathbb{R}\setminus\widetilde{\Omega}})(z),
\end{equation}
for all $y\in I_j$.

 By Chevichef's inequality, using the fact that $g\leq\lambda$ and choosing $r=r_w=1+\frac{1}{16e^{\frac{1}{e}}||w||_{A_1^+}}$  in order to apply \textbf{Theorem \ref{central}} and \textbf{Lemma \ref{RHI}}, we get that
\begin{align*}
III&=w\left(\left\{x\in\re\setminus\widetilde{\Omega}: \left|T^+g\right|(x)>\frac{\lambda}{2}\right\}\right)\leq \frac{2^p}{\lambda^p} \int_{\re\setminus\widetilde{\Omega}} (\left|T^+g\right|(x))^pw(x)\,dx\\
&\leq \frac{2^p}{\lambda^p} \sum_{k\in\mathbb{Z}}\int_{k}^{k+1} (\left|T^+g\right|(x))^pw(x)\chi_{(\re\setminus\widetilde{\Omega})}(x) \chi_{(k,k+1)}(x)\,dx\\
&\leq \frac{2^p}{\lambda^p} (C pp'(r')^{\frac{1}{p'}})^p \sum_{k\in\mathbb{Z}}\int_{k}^{k+1} (\left|g\right|(x))^p M_r^-(w\chi_{(\re\setminus\widetilde{\Omega})} \chi_{(k,k+1)})(x)\,dx\\
&\leq \frac{2^p}{\lambda} (C pp'(r')^{\frac{1}{p'}})^p \sum_{k\in\mathbb{Z}}\int_{k}^{k+1} (\left|g\right|(x)) M_r^-(w\chi_{(\re\setminus\widetilde{\Omega})} \chi_{(k,x)})(x)\,dx\\
&\leq \frac{2^{p+1}}{\lambda} (C pp'(r')^{\frac{1}{p'}})^p \sum_{k\in\mathbb{Z}}\int_{k}^{k+1} (\left|g\right|(x)) M^-(w\chi_{(\re\setminus\widetilde{\Omega})} \chi_{(k,x)})(x)\,dx\\
&\leq \frac{2^{p+1}}{\lambda} (C pp'(r')^{\frac{1}{p'}})^p \int_{\re} (\left|g\right|(x)) M^-(w\chi_{(\re\setminus\widetilde{\Omega})} )(x)\,dx\\
%&\leq \frac{2^p}{\lambda^p} (C pp'(r')^{\frac{1}{p'}})^p \int_{\re} (\left|g\right|(x))^p M^-_r(w\chi_{\widetilde{\Omega}^c})(x)\,dx\\
%&\leq \frac{2^p}{\lambda} (C pp'((r')^{\frac{1}{p'}})^p \int_{\re} |g(x)| M^-_r(w\chi_{\widetilde{\Omega}^c})(x)\,dx\\
&\leq \frac{2^{p+1}}{\lambda} (C pp'((r')^{\frac{1}{p'}})^p \left[\int_{\re  / \Omega } |g(x)| M^-(w\chi_{\re\setminus\widetilde{\Omega}})(x)\,dx + \int_{\Omega } |g(x)| M^-(w\chi_{\re\setminus\widetilde{\Omega}})(x)\,dx \right].
\end{align*}

Recalling that $f(x)=g(x)$ for all $x\in \re \setminus\Omega$ and arguing as in $B$ for the integral of $g$ in $\Omega$, this time using (\ref{desMrOmega}), we obtain
$$
\int_{\re  } |g(x)| M^-(w\chi_{\re\setminus\widetilde{\Omega}})(x)\,dx\leq \frac 32 \int_{\re } f(x)M^-(w)(x)\,dx.
$$
Now the fact  that $w\in A_1^+$ and $r'=r_w' \leq C ||w||_{A_1^+}$ implies
\begin{align*}
III&\leq \frac{2^{p+1}}{\lambda} (C pp'((r')^{\frac{1}{p'}})^p \int_{\re} f(x) M^-w(x)\,dx\\
&\leq \frac{2^{p+1}}{\lambda} (C pp'(||w||_{A_1^+})^{\frac{1}{p'}})^p  ||w||_{A_1^+} \int_{\re} f(x) w(x)\,dx\\
&\leq \frac{C^p2^{p+1}}{\lambda}  [pp'||w||_{A_1^+}]^p \int_{\re} f(x) w(x)\,dx.
\end{align*}
We take $p=1+\frac{1}{\log(e+||w||_{A_1^+})}$ and observing that $t^{(\log(e+t))^{-1}}$ and $t^{t^{-1}}$ are bounded for $t>1$ we have
\begin{align*}
 [pp'||w||_{A_1^+}]^p\leq C\log(e+||w||_{A_1^+}) ||w||_{A_1^+},
\end{align*}
and as $1<p<2$ we obtain
\begin{align*}
III&\leq \frac{C }{\lambda}\log(e+||w||_{A_1^+})||w||_{A_1^+} \int_{\re} f(x) w(x)\,dx.
\end{align*}
Combining this estimate with $I$ and $II$ completes the proof.\\
\end{proof}

\subsection{Proof of the Corollaries}

We shall need the following Lemma. The equivalent to weights in $A_p$ is proven in \cite{DGPP}.
\begin{lemma}\label{RF}
Let $1<q<\infty$ and let $w\in A_q^+$. Then there exists a nonnegative sublinear operator $D$ bounded on $L^{q'}$ such that for any nonnegative $h\in L^{q'}(w)$:
\begin{enumerate}
\item $h\leq D(h)$;
\item $||D(h)||_{L^{q'}(w)}\leq 2||h||_{L^{q'}(w)}$;
\item $D(h).w\in A_1^+$ \quad with \quad $||D(h).w||_{A_1^+}\leq Cq2^{q}||w||_{A_q^+}$,
\end{enumerate}
where the constant $C$ not depend on $||w||_{A_1^+}$ and $q$.
\end{lemma}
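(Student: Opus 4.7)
The plan is to adapt the Rubio de Francia iteration algorithm (the one-sided analogue of \textbf{Lemma 2.3} in \cite{ASC}, whose two-sided $A_p$ version appears in \cite{DGPP}) by passing to the dual weight $\sigma:=w^{1-q'}$.

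A direct manipulation of the $A_q^+$ defining inequality shows that $w\in A_q^+$ is equivalent to $\sigma\in A_{q'}^-$, with the quantitative bound $||\sigma||_{A_{q'}^-}\le ||w||_{A_q^+}^{q'-1}$. Combining this with the Buckley-type sharp bound for $M^-$ acting on $L^{q'}(\sigma)$ (the one-sided analogue of Buckley's theorem, obtained by tracking constants through the weak-type $(1,1)$ estimate and Marcinkiewicz interpolation) yields
$$||M^-||_{L^{q'}(\sigma)\to L^{q'}(\sigma)}\;\le\; N\;:=\;C\,q\,2^{q}\,||w||_{A_q^+}.$$

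The next step is the usual Rubio de Francia iteration on the $\sigma$-side: for nonnegative $g\in L^{q'}(\sigma)$ I would set
$$\widetilde D(g)\;=\;\sum_{k=0}^{\infty}\frac{(M^-)^k g}{(2N)^k}.$$
Then $\widetilde D$ is nonnegative and sublinear; the geometric series yields $||\widetilde D(g)||_{L^{q'}(\sigma)}\le 2||g||_{L^{q'}(\sigma)}$; the bound $g\le \widetilde D(g)$ is immediate from the $k=0$ term; and the absorption identity $M^-\widetilde D(g)\le 2N\,\widetilde D(g)$ shows $\widetilde D(g)\in A_1^+$ with $A_1^+$-constant at most $2N$. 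Finally, for nonnegative $h\in L^{q'}(w)$ set $D(h):=\widetilde D(hw)/w$. The change of variable $g=hw$ identifies $L^{q'}(\sigma)$ with $L^{q'}(w)$ (since $(hw)^{q'}\sigma=h^{q'}w$), so properties (1) and (2) follow directly from the corresponding properties of $\widetilde D$. For (3) one has $D(h)\,w=\widetilde D(hw)\in A_1^+$ with constant at most $2N\le Cq\,2^q||w||_{A_q^+}$, which is the claimed estimate.

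The main obstacle is the quantitative control on $||M^-||_{L^{q'}(\sigma)}$ with the correct linear dependence on $||w||_{A_q^+}$ --- this is the one-sided Buckley bound plus the constant tracking in the $A_q^+/A_{q'}^-$ duality. Once that operator-norm estimate is in hand, the rest is a mechanical transcription of the classical RF scheme, with the sole novelty being that the iteration is run on the backward-maximal side so that the resulting weight $\widetilde D(hw)$ lands in $A_1^+$ rather than $A_1^-$.
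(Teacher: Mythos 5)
Your proof is correct and is essentially the same argument as the paper's: the paper works with the conjugated operator $S(h)=w^{-1}M^-(hw)$ on $L^{q'}(w)$ and iterates $S$, while you iterate $M^-$ directly on $L^{q'}(\sigma)$ and conjugate at the end, but under the isometry $h\mapsto hw$ from $L^{q'}(w)$ to $L^{q'}(\sigma)$ these are literally the same Rubio de Francia iteration, with $||S||_{L^{q'}(w)}=||M^-||_{L^{q'}(\sigma)}$. Both rest on the same two ingredients: the duality $w\in A_q^+\Leftrightarrow\sigma\in A_{q'}^-$ with $||\sigma||_{A_{q'}^-}=||w||_{A_q^+}^{1/(q-1)}$, and the sharp one-sided Buckley bound for $M^-$ (which the paper cites from Mart\'in-Reyes--de la Torre rather than rederiving, as you sketch).
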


\begin{proof}
In \cite{RT1}  F.J. Mart\'in-Reyes and A. de la Torre   proved that

 \emph{If $w\in A_p^+$ then}
 \begin{equation}\label{BucM+}
 ||M^+||_{L^p(w)}\leq Cp'2^{p'}||w||_{A_p^+}^{\frac{1}{p-1}}.
 \end{equation}

This result is  an analogous one,  for the one-sided maximal operator $M^+$, to the one obtain in  \cite{B},  see \textbf{Theorem 2.5}.  In this work  S. M.  Buckley  gives a sharp estimation in norm $L^p$ of the Hardy-Littlewood maximal operator $M$ respect to weights $w\in A_p$.
We define the operator $S(h)=w^{-1}M^-(|h|w)$, then $S$ is bounded in $L^{q'}(w)$, moreover, $||S||_{L^{q'}(w)}\leq C q2^q ||w||_{A_q^+}$, indeed using the analogous version for $M^-$ of (\ref{BucM+}) we get,
\begin{align*}
||Sh||_{L^{q'}(w)}&=\left(\int_{\re}(w^{-1}M^-(|h|w))^{q'}w\,dx \right)^{\frac{1}{q'}}=\left(\int_{\re}(M^-(|h|w))^{q'}w^{1-q'}\,dx \right)^{\frac{1}{q'}}\\
&\leq ||M^-||_{L^{q'}(w^{1-q'})}|||h|w||_{L^{q'}(w^{1-q'})}\leq Cq2^q||w^{1-q'}||_{A_{q'}^-}^{\frac{1}{q'-1}}||h||_{L^{q'}(w)}.
\end{align*}
Recalling that $w\in A_q^+$ implies $w^{1-q'}\in A_{q'}^-$ and that $||w^{1-q'}||_{A_{q'}^-}=||w||_{A_q^+}^{\frac{1}{q-1}}$, we get
 $||S||_{L^{q'}(w)}\leq C q 2^q||w||_{A_q^+}$, as claimed.

Now we define the operator $D$ via the following convergent Neumann series:
\begin{align*}
D(h)=\sum_{k=0}^{\infty} \frac{S^k(h)}{2^k||S||^k}, \quad \quad \mathrm{where} \quad ||S||=||S||_{L^{q'}(w)}.
\end{align*}
Then $(1)$ and $(2)$ are clearly satisfied.\\ $(3)$ It follows from the definition of $D$ and the sublinearity of $S$ that
\begin{align*}
S(D(h))\leq 2||S||(D(h)-h)\leq 2||S||D(h),
\end{align*}
therefore
\begin{align*}
M^-(D(h)w)=M^-(D(h)w)w^{-1}w=S(D(h))w\leq2||S||D(h)w\leq c q2^q ||w||_{A_q^+} D(h)w.
\end{align*}
\end{proof}

\begin{proof}[Proof of Corollary \ref{coroldebil}]
For $\alpha>0$ we set $\Omega_\alpha=\{x\in\re :|T^+f(x)|>\alpha\}$ and let $\varphi(t)=t\log(e+t)$. Applying \textbf{Lemma \ref{RF}} with $q=p$, we get a sublinear operator $D$ bounded on $L^{p'}$ satisfying properties (1), (2), and (3). Using these properties and {\bf Theorem (\ref{debil})}, we obtain
\begin{align*}
\int_{\Omega_\alpha} h\,w\,dx&\leq \int_{\Omega_\alpha} D(h)\,w\,dx\leq\frac{C}{\alpha}\varphi(||D(h).w||_{A_1^+})||f||_{L^1(D(h).w)}\\
&\leq\frac{C}{\alpha}\varphi(Cp2^p||w||_{A_p^+})\int_{\re}|f|D(h)\,wdx\\
&\leq\frac{C}{\alpha}2\varphi(Cp2^p)\varphi(||w||_{A_p^+})\left(\int_{\re}|f|^p\,wdx\right)^{\frac{1}{p}}\left(\int_{\re}D(h)^{p'}\,wdx\right)^{\frac{1}{p'}}\\
&\leq\frac{C}{\alpha}\varphi(||w||_{A_p^+})||f||_{L^p(w)}||h||_{L^{p'}(w)},
\end{align*}
taking the supremum over all $h$ with $||h||_{L^{p'}(w)}=1$ completes the proof.
\end{proof}

\begin{proof}[Proof of Corollary \ref{corolfuerte}]
Given the one-sided singular operator $T^-$, its adjoint operator is $T^+$. Let $w\in A_p^-$ then $\sigma\in A_{p'}^+$ with $||\sigma||_{A_{p'}^+}=||w||_{A_p^-}^{\frac{1}{p-1}}$.
Applying  \textbf{Corollary \ref{coroldebil}} to the one-sided singular operator $T^+$ and the weight $\sigma$ we get
\begin{align*}
||T^+||_{L^{p',\infty}(\sigma)}&\leq C ||w||_{A_p^-}^{\frac{1}{p-1}}\log\left(e+||w||_{A_p^-}^{\frac{1}{p-1}}\right)||f||_{L^{p'}(\sigma)}\\
&\leq C ||w||_{A_p^-}^{\frac{1}{p-1}}\log(e+||w||_{A_p^-})||f||_{L^{p'}(\sigma)}.
\end{align*}
From this, by duality we obtain
\begin{align*}
||T^-||_{L^{p}(w)}\leq C ||w||_{A_p^-}^{\frac{1}{p-1}}\log(e+||w||_{A_p^-})\left|\left|\frac{f}{\sigma}\right|\right|_{L^{p,1}(\sigma)},
\end{align*}
where $L^{p,1}(\sigma)$ is the standard weighted lorentz space. Setting here $f=\sigma\chi_E$, where $E$ is any measurable set, completes the proof.
\end{proof}

\section{Appendix}\label{section:main}

In this section we will give  another version of  \textbf{Lemma \ref{RHI}}  which will allow us to give an easier proof of a slight weak version   of \textbf{Lemma \ref{epsilon}}.

\begin{lemma}\label{RHI2} {\rm One-sided RHI}. \\
Let  $1\leq p < \infty$, $w\in A_{p}^+$ and $a<b<c$ with $b-a=2(c-b)$. If  $r=1+\frac{1}{4^{p+2}e^{\frac{1}{e}}||w||_{A_p^+}}$ , for  $p>1$ and  $r=1+\frac{1}{16e^{\frac{1}{e}}||w||_{A_1^+}}$, for  $p=1$ then
\begin{align*}
\frac{1}{b-a}\int_a^b w^r \leq \frac{27}{4} \left(\frac{1}{c-a}\int_a^cw\right)^r.
 \end{align*}
\end{lemma}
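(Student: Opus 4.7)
My plan is to adapt the proof of \textbf{Lemma \ref{RHI}} to this extended-interval setting, replacing the hard-to-estimate quantity $M^-(w\chi_{(a,b)})(b)$ by the explicit value
\[
\lambda_0 \;:=\; \frac{3}{c-a}\int_a^c w,
\]
three times the average of $w$ over $(a,c)$. The geometric hypothesis $b-a=2(c-b)$ enters only at the very first step, where I will show that the level sets of $M^-(w\chi_{(a,c)})$ above $\lambda_0$ stay inside $(a,c)$.

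First, let $\Omega_\lambda=\{x\in\re: M^-(w\chi_{(a,c)})(x)>\lambda\}$ for $\lambda>\lambda_0$. Since $M^-(w\chi_{(a,c)})$ vanishes on $(-\infty,a)$ and decays to $0$ at $+\infty$, any connected component $(a_j,b_j)$ has $a_j\geq a$ and $b_j<\infty$, so Sawyer's one-sided Calder\'on--Zygmund identity (Lemma~2.1 of \cite{S}) applied to $f=w\chi_{(a,c)}$ gives
\[
\lambda(b_j-a_j)\;=\;\int_{a_j}^{\min(c,b_j)}w\;\leq\;\int_a^c w\;=\;\frac{\lambda_0}{3}(c-a),
\]
so $b_j-a_j<(c-a)/3$. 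Hence $b_j>c$ would force $b_j-a_j>c-a$, a contradiction. Therefore every component lies in $[a,c]$ and, by the same identity, satisfies $\frac{1}{|I_j|}\int_{I_j}w=\lambda$. Applying Lemma~4 of \cite{R} to each $I_j$ and summing (just as in the proof of \textbf{Lemma \ref{RHI}}) produces, with the same $\beta$ as there,
\[
w(\{x\in(a,c):w(x)>\lambda\})\;\leq\;2\lambda\bigl|\{x\in(a,c):w(x)>\beta\lambda\}\bigr|\qquad(\lambda>\lambda_0),
\]
and because $\{w>\lambda\}\cap(a,b)\subset\{w>\lambda\}\cap(a,c)$, the very same right-hand side bounds $w(\{x\in(a,b):w(x)>\lambda\})$.

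Next, I run the integration argument from \textbf{Lemma \ref{RHI}} twice, once for $(a,c)$ and once for $(a,b)$, with $\delta=r-1$ chosen as in \textbf{Lemma \ref{RHI}} so that $\frac{2\delta}{(1+\delta)\beta^{1+\delta}}\leq\frac12$ (which also gives $\frac{4\delta}{(1+\delta)\beta^{1+\delta}}\leq 1$). The $(a,c)$-version yields $\int_a^c w^{1+\delta}\leq 2\lambda_0^{\delta}\int_a^c w$, while the $(a,b)$-version reads
\[
\int_a^b w^{1+\delta}\;\leq\;\lambda_0^{\delta}\int_a^b w+\frac{2\delta}{(1+\delta)\beta^{1+\delta}}\int_a^c w^{1+\delta}.
\]
Substituting the former into the latter (and using $\int_a^b w\leq\int_a^c w$) I obtain $\int_a^b w^{1+\delta}\leq 2\lambda_0^{\delta}\int_a^c w$. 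Dividing by $b-a$, using $(c-a)/(b-a)=3/2$ and the definition of $\lambda_0$, the right-hand side becomes $3^{1+\delta}\bigl((c-a)^{-1}\int_a^c w\bigr)^{1+\delta}$, and $3^{1+\delta}\leq 27/4$ because $\delta\leq 1/16$ is well below $\log_3(27/4)-1\approx 0.74$.

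The main obstacle is the containment in the first step: it rests on \emph{both} the aspect ratio $b-a=2(c-b)$ and the particular multiplier $3$ in the definition of $\lambda_0$. With a smaller multiplier or a different geometry, components of $\Omega_\lambda$ could escape past $c$; Sawyer's identity would then no longer identify them as ``average-$=\lambda$'' intervals, and the self-improvement integration would fail to close up on a clean right-hand side expressed only through the average over $(a,c)$.
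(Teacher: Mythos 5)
Your key containment claim is incorrect. You assert that for $\lambda>\lambda_0=\tfrac{3}{c-a}\int_a^c w$, every component $(a_j,b_j)$ of $\Omega_\lambda=\{x: M^-(w\chi_{(a,c)})(x)>\lambda\}$ stays inside $[a,c]$, and you justify this by ``$b_j>c$ would force $b_j-a_j>c-a$.'' But you have only shown $a_j\ge a$, not $a_j=a$; if $a_j$ is close to $c$ then $b_j>c$ forces merely $b_j-a_j>c-a_j$, which is not in conflict with $b_j-a_j<(c-a)/3$. Concretely, a large spike of $w$ just to the left of $c$ makes $M^-(w\chi_{(a,c)})(c)$ far larger than $\lambda_0$, so components do overshoot $c$; for such a component the Calder\'on--Zygmund identity records the average of $w\chi_{(a,c)}$ over $(a_j,b_j)$, not over $(a_j,c)$, and the piece $(c,b_j)$ contributes $|I_j|$ to $\lambda\sum_j|I_j|$ without being balanced by any mass of $\{w>\beta\lambda\}\cap(a,c)$ via Lemma~4. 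The distributional inequality $w(\{x\in(a,c):w>\lambda\})\le 2\lambda|\{x\in(a,c):w>\beta\lambda\}|$ for all $\lambda>\lambda_0$ therefore does not follow, and the two subsequent integrations rest on it. The rest of your computation (the two layer-cake integrations, the substitution, and the final arithmetic giving $3^{1+\delta}\le 27/4$) is correct \emph{given} that inequality, but the inequality itself is unproved. Essentially you are rerunning the proof of Lemma~\ref{RHI} on $(a,c)$ with a threshold smaller than the natural one $M^-(w\chi_{(a,c)})(c)$, and that is precisely where the argument requires the threshold to be at least this maximal value.

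The paper avoids this problem by a different route: it invokes Lemma~\ref{RHI} as a black box at each point $x\in(b,c)$, applied to the interval $(a,x)$, obtaining $\int_a^b w^r\le 2\,M(w\chi_{(a,c)})(x)^{r-1}\int_a^c w$; thus the whole interval $(b,c)$, of length $(c-a)/3$, sits inside a superlevel set of $M(w\chi_{(a,c)})$, and the unweighted weak type $(1,1)$ bound for $M$ converts this containment into the desired estimate after rearranging. No new distributional inequality (and hence no delicate control of where components of a level set end) is needed. If you wish to salvage your approach, you would need a different way to handle the (at most one) component protruding past $c$; as written the step fails.
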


\begin{proof}
By \textbf{Lemma \ref{RHI}}
 \begin{align*}
\int_{a}^{x} w^r \leq 2 M^-(w\chi_{(a,x)})(x)^{r-1}\int_a^x w.
 \end{align*} for every bounded interval  $(a,x)$. Then
for every  $x\in(b,c)$, we have
 \begin{align*}
\int_a^b w^r \leq \int_a^x w^r \leq 2 M^-(w\chi_{(a,x)})(x)^{r-1}\int_a^xw\leq 2 M(w\chi_{(a,c)})(x)^{r-1}\int_a^cw.
 \end{align*}
Then  $(b,c)\subset\left\{x: M(w\chi_{(a,c)})(x)\geq \left(\frac{\int_a^b w^r}{2\int_a^cw}\right)^{\frac{1}{r-1}}\right\}$. Using that  $M$ is of weak type $(1,1)$ we obtain,
 \begin{align*}
c-b\leq 3 \left(\frac{2\int_a^cw}{\int_a^b w^r}\right)^{\frac{1}{r-1}}\int_a^c w, \quad \quad \text{then}\\
(c-b)\left(\int_a^b w^r\right)^{\frac{1}{r-1}}\leq 2^{\frac{1}{r-1}} 3 \left({\int_a^c w}\right)^{\frac{r}{r-1}}.
 \end{align*}
Observing that $1<r<2$, we get
  \begin{align*}
\frac{1}{b-a}\int_a^b w^r& = \frac{1}{2(c-b)}\int_a^b w^r \leq 2\, 3^{r-1}\frac{1}{2(c-b) (c-b)^{r-1}} \left({\int_a^c w}\right)^{r}\\
&=3^{r-1} \left( \frac{3}{2(c-a)} {\int_a^c w}\right)^{r}=\frac{3^{2r-1}}{2^r} \left( \frac{1}{(c-a)} {\int_a^c w}\right)^{r}\\
&\leq \frac{27}{4} \left( \frac{1}{(c-a)} {\int_a^c w}\right)^{r}.
 \end{align*}
\end{proof}

The following Lemma  is a slight weak version of \textbf{Lemma \ref{epsilon}}.

\begin{lemma}\label{epsilon2}
Let $p\geq1$,  $w\in A_p^-$ ,   $a<b<c$ such that $2(b-a)=(c-b)$ and   $E\subseteq (b,c)$  a measurable set. Then for every  $\epsilon > 0$ there exists $C=C(\epsilon,p)$ such that  if  $\left|E\right|<e^{-C||w||_{A_p^-}}(b-a)$ then  $w(E)<\epsilon w(a,c)$.
\end{lemma}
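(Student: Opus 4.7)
My plan is to combine the mirror form of \textbf{Lemma \ref{RHI2}} on the longer interval $(b,c)$ with H\"older's inequality on $E$, and then exploit the sharp dependence of $r_w$ on $||w||_{A_p^-}$ to convert the exponentially small hypothesis on $|E|$ into a constant bound on $w(E)/w(a,c)$ that lies below $\e$.

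First I would record the mirror of \textbf{Lemma \ref{RHI2}} for $w\in A_p^-$: reflecting $\re$ and using the obvious $M^+$-analogue of \textbf{Lemma \ref{RHI}}, one obtains, for $a<b<c$ with $c-b=2(b-a)$ and $r=r_w$ now defined via $||w||_{A_p^-}$,
$$\frac{1}{c-b}\int_b^c w^r \le \frac{27}{4}\biggl(\frac{1}{c-a}\int_a^c w\biggr)^{\!r}.$$
Since $E\subseteq(b,c)$, H\"older's inequality with exponents $r$ and $r'$ gives
$$w(E)=\int_E w\cdot 1 \le \biggl(\int_b^c w^r\biggr)^{\!1/r}|E|^{1/r'}.$$
Combining these, using $c-b=2(b-a)$ and $c-a=3(b-a)$, and absorbing the harmless factor $(27/4)^{1/r}$ for $1<r<2$ into a universal constant $C_0$, yields
$$\frac{w(E)}{w(a,c)} \le C_0 \biggl(\frac{|E|}{b-a}\biggr)^{\!1/r'}.$$

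The hypothesis $|E|<e^{-C||w||_{A_p^-}}(b-a)$ then turns this into $w(E)/w(a,c)\le C_0 e^{-C||w||_{A_p^-}/r'}$. The decisive quantitative point is that from $r-1=(4^{p+2}e^{1/e}||w||_{A_p^-})^{-1}$ for $p>1$ (respectively $(16e^{1/e}||w||_{A_p^-})^{-1}$ for $p=1$) one has $r'=1+(r-1)^{-1}\le c_p\,||w||_{A_p^-}$, so $||w||_{A_p^-}/r'\ge c_p^{-1}$ depends only on $p$ and not on the weight. It therefore suffices to choose $C=C(\e,p)$ large enough that $C_0 e^{-C/c_p}<\e$, and the conclusion follows.

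The only real care is in setting up the mirror of \textbf{Lemma \ref{RHI2}} with the correct geometric configuration, so that the longer interval $(b,c)$ sits on the right of $(a,b)$ and contains $E$; this is exactly what the hypothesis $c-b=2(b-a)$ encodes after switching from $A_p^+$ to $A_p^-$. Beyond that, the proof reduces to a single application of H\"older together with the sharp form of $r_w$ already recorded in \textbf{Lemma \ref{RHI}}, and it bypasses the $A_p^+(g)$ classes and the weak-$(r',r')$ bound for $M_w^+$ that were used in the proof of \textbf{Lemma \ref{epsilon}} in Section 3, at the modest cost of the stronger geometric hypothesis $c-b=2(b-a)$.
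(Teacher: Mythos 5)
Your proposal is correct and is essentially the same argument as in the paper's Appendix: apply the mirror form of \textbf{Lemma \ref{RHI2}} for $A_p^-$ weights on $(b,c)$ together with H\"older's inequality with exponents $r,r'$, and then absorb the resulting bound $w(E)/w(a,c)\lesssim(|E|/(b-a))^{1/r'}$ using the fact that $r'\lesssim_p\|w\|_{A_p^-}$ (as already done in \textbf{Lemma \ref{epsilon}}). The order of the two ingredients is interchanged but the substance is identical.
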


\begin{proof} We will use the analogous to  \textbf{Lemma \ref{RHI2}} for $A_p^-$ weights.
\begin{align*}
w(E)&=\frac{1}{c-b}\int_b^c w\chi_E \,(c-b)\leq(c-b)\left(\frac{1}{c-b}\int_b^c w^r\right)^{\frac{1}{r}}\, \left(\frac{1}{c-b}\int_b^c \chi_E^{r'}\right)^{\frac{1}{r'}}\\
&=\left(\frac{|E|}{c-b}\right)^{\frac{1}{r'}}(c-b)\frac{27}{4}\frac{1}{c-a}\int_a^c w\leq\left(\frac{|E|}{b-a}\right)^{\frac{1}{r'}}\frac{27}{4}\frac{2}{3 }\int_a^c w \leq\epsilon w(a,c),
 \end{align*}
  where the last inequality is obtained by following the same steps as in (\ref{epsilon-L3.2}).
\end{proof}

As a  Corollary of   \textbf{Lemma \ref{RHI2}}  we obtain another proof of  \textbf{Proposition 3} in \cite{R}, this is

\begin{corol}
Let  $1< p < \infty$  and $w\in A_p^+$ then  $w\in A_{p-\epsilon}^+$, with $p-\epsilon=\frac{p-1}{r(\sigma)}+1$
where $\sigma=w^{1-p'}$ and $r(\sigma)$ is the one obtained  in the analogous version of \textbf{Lemma \ref{RHI2}} for a weight in  $A_{p'}^-$.
\end{corol}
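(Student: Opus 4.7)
The plan is to prove the equivalent statement $\sigma^r \in A_{q'}^-$, where $\sigma := w^{1-p'}$ is the dual weight, $r := r(\sigma)$ is the reverse H\"older exponent from the $A_{p'}^-$-analog of Lemma \ref{RHI2}, and $q' := r(p'-1) + 1$. The equivalence $w \in A_q^+ \Leftrightarrow \sigma^r \in A_{q'}^-$ comes from the one-sided Muckenhoupt duality (namely $w \in A_q^+ \Leftrightarrow w^{1-q'} \in A_{q'}^-$) together with the identity $\sigma^r = w^{r(1-p')} = w^{1-q'}$, which holds precisely when $q - 1 = (p-1)/r$. This is the one-sided mirror of the classical openness argument: in the two-sided $A_p$ theory one multiplies the RHI on $\sigma$ by the $A_{p'}$ condition on a common interval to deduce $\sigma^r \in A_{r(p'-1)+1}$, and my goal is to carry out the analogous multiplication in the one-sided setting, using the RHI supplied by Lemma \ref{RHI2}.

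Fix a triple $a < x < b$ and set $\tilde a := (3x-b)/2$. When $\tilde a \ge a$ (the range $x \ge (2a+b)/3$), the configuration $(\tilde a, x, b)$ satisfies $b - x = 2(x - \tilde a)$, so the $A_{p'}^-$-analog of Lemma \ref{RHI2} gives
\[
\int_x^b \sigma^r \le C(b-x)^{1-r}\Bigl(\int_{\tilde a}^b \sigma\Bigr)^{\!r}.
\]
Using the identity $q' - 1 = r(p'-1)$, the $A_{q'}^-$ product factors as
\[
\Bigl(\int_x^b \sigma^r\Bigr)\Bigl(\int_a^x w\Bigr)^{\!q'-1} \le C(b-x)^{1-r}\Bigl[\Bigl(\int_{\tilde a}^b \sigma\Bigr)\Bigl(\int_a^x w\Bigr)^{\!p'-1}\Bigr]^{r}.
\]
The bracketed quantity is (up to the left-shift from $a$ to $\tilde a$) an $A_{p'}^-$ product for $\sigma$. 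I plan to bound it using the $A_{p'}^-$ hypothesis applied at the point $\tilde a$ in $(a,b)$, which gives $(\int_{\tilde a}^b \sigma)(\int_a^{\tilde a}w)^{p'-1} \le C(b-a)^{p'}$, and then absorb the discrepancy $(\int_a^x w/\int_a^{\tilde a}w)^{p'-1}$ into the leftover power $(b-x)^{1-r}$ via the identity $rp' - (r-1) = q'$. For the complementary range $x < (2a+b)/3$ (where $\tilde a < a$), I will instead apply Lemma \ref{RHI2} to the triple $(a, (2a+b)/3, b)$, obtaining $\int_{(2a+b)/3}^b \sigma^r \le C(b-a)^{1-r}(\int_a^b \sigma)^r$; since $b - x \asymp b - a$ in this range, monotonicity of the integral together with the $A_{p'}^-$ hypothesis on $(a,b)$ will suffice.

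The main obstacle is the \emph{shift discrepancy} between $\int_a^x w$ and $\int_a^{\tilde a} w$ in the first case. Unlike the two-sided theory, where the RHI and the $A_{p'}$ condition live on the same interval and match automatically, the one-sided RHI from Lemma \ref{RHI2} forces the lower endpoint to jump from $a$ to $\tilde a = (3x-b)/2$. Controlling the ratio $\int_a^x w/\int_a^{\tilde a}w$ uniformly in $x \in [(2a+b)/3, b)$---or, equivalently, showing that the extra factor $((b-a)/(b-x))^{r-1}$ generated by the shift is absorbed by $(\int_a^x w)^{q'-1}/(\int_a^{\tilde a}w)^{q'-1}$---will be the delicate step, requiring a careful use of the $A_p^+$ condition applied at two nested interior points of $(a,b)$.
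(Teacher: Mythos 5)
Your plan does not close, and the obstacle you flag at the end is not merely delicate---it is fatal to this route, which is why the paper uses a different tool.

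The core problem is the scale mismatch you call the ``shift discrepancy.'' The one-sided RHI of Lemma~\ref{RHI2} (in its $A_{p'}^-$ form) is dyadic in nature: to bound $\int_x^b\sigma^r$ it only permits extending the interval a fixed fraction to the left, to $\tilde a$ with $b-\tilde a=\tfrac32(b-x)$. But the $A_{q'}^-$ product you need to control pairs $\int_x^b\sigma^r$ with $\int_a^x w$, and $a$ can be arbitrarily far to the left of $\tilde a$. The ratio $\bigl(\int_a^x w/\int_a^{\tilde a} w\bigr)^{p'-1}$ is genuinely uncontrollable in terms of $\|w\|_{A_p^+}$ and $(b-a)/(b-x)$: an $A_p^+$ weight may concentrate mass in $(\tilde a,x)$ with very little in $(a,\tilde a)$, and then the discrepancy blows up independently of the leftover factor $(b-x)^{1-r}$. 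Trying to remedy this by shrinking the $A_{p'}^-$ reference interval to $(\alpha,b)$ with $\alpha=2\tilde a-b$ trades the problem for the ratio $\bigl(\int_a^x w/\int_\alpha^{\tilde a} w\bigr)^{p'-1}$, which is just as bad. The complementary case $x<(2a+b)/3$ also fails as written: there $\int_x^b\sigma^r\geq\int_{(2a+b)/3}^b\sigma^r$, so the RHI applied to the shorter interval bounds the wrong quantity.

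The paper sidesteps the whole issue by not using the three-point $A_{p-\epsilon}^+$ definition at all. It invokes instead the equivalent characterization of $A_p^+$ from \cite{ST} via four points $a<b<c<d$ with $2(b-a)=2(d-c)=c-b$, in which the $w$-average lives on $(a,b)$, the $\sigma$-average lives on $(c,d)$, and the two are separated by the gap $(b,c)$ of comparable length. When the $A_{p'}^-$ version of Lemma~\ref{RHI2} is applied to $\int_c^d\sigma^r$, the required leftward extension of $(c,d)$ lands inside $(b,d)$---absorbed by the gap---so the result is compared to $\frac1{d-b}\int_b^d\sigma$ at the cost of a fixed constant, and the $A_{p'}^-$ hypothesis can then be applied on $(b,d)$ with no mismatch. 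That gapped formulation is exactly the device that makes the one-sided RHI compatible with the $A_p^+$ machinery, and your argument is missing it.
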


\begin{proof}
In  \cite{ST} it is proved  that   $w\in A_p^+$ if, and only if  there exists $C>0$ such that
 \begin{equation}
 \sup_{a,b,c,d}\frac{1}{(b-a)^p}\left(\int_a^bw\right)\left(\int_c^d w^{\frac{-1}{p-1}}\right)^{p-1}<C.
 \end{equation}
where the supremum   is taken over all  $a,b,c,d$ such that  $a<b<c<d$ and  $2(b-a)=2(d-c)=c-b$.

Let $r=r(\sigma)$ be the one of \textbf{Lemma \ref{RHI2}} and   $a,b,c,d$  as in the previous line, then
\begin{align*}
&\left(\frac{1}{b-a}\int_a^bw \right) \left(\frac{1}{d-c}\int_c^d w^{\frac{-1}{p-\epsilon-1}}\right)^{p-\epsilon-1}\leq \left(\frac{1}{b-a}\int_a^bw \right) \left(\frac{1}{d-c}\int_c^d \sigma^r\right)^{\frac{p-1}{r}}\\
&\leq \left(\frac{1}{b-a}\int_a^bw \right) \left(\frac{1}{d-b}\frac{27}{4}\int_b^d \sigma\right)^{p-1}\leq \frac{1}{4}\left(\frac{81}{64}\right)^{p-1}||w||_{A_p^+}.
 \end{align*}
\end{proof}

\end{document}